\newtheorem{lemma}{Lemma}
\newtheorem{thm}{Theorem}
\newtheorem{prop}{Proposition}
\newenvironment{smallermatrix}[1][c]
{\null\,\vcenter\bgroup
  \Let@\restore@math@cr\default@tag
  \baselineskip0pt \lineskip0.4pt \lineskiplimit0pt
  \ialign\bgroup\if#1l\else\hfil\fi$\m@th\scriptstyle##$\if#1r\else\hfil\fi&&\thickspace\hfil
  $\m@th\scriptstyle##$\hfil\crcr
}{%
  \crcr\egroup\egroup\,%
}
\NewDocumentCommand{\ts}{O{c} e{^?_}}{
  \begin{smallermatrix}[#1]
  \mathstrut\IfValueT{#2}{#2} \\
  \mathstrut\IfValueT{#3}{#3} \\
  \mathstrut\IfValueT{#4}{#4}
  \end{smallermatrix}%
}
\journal{Arxiv}
\begin{document}

\begin{frontmatter}

%% Title, authors and addresses

%% use the tnoteref command within \title for footnotes;
%% use the tnotetext command for theassociated footnote;
%% use the fnref command within \author or \address for footnotes;
%% use the fntext command for theassociated footnote;
%% use the corref command within \author for corresponding author footnotes;
%% use the cortext command for theassociated footnote;
%% use the ead command for the email address,
%% and the form \ead[url] for the home page:
%% \title{Title\tnoteref{label1}}
%% \tnotetext[label1]{}
%% \author{Name\corref{cor1}\fnref{label2}}
%% \ead{email address}
%% \ead[url]{home page}
%% \fntext[label2]{}
%% \cortext[cor1]{}
%% \affiliation{organization={},
%%             addressline={},
%%             city={},
%%             postcode={},
%%             state={},
%%             country={}}
%% \fntext[label3]{}

\title{High-order empirical interpolation methods for real-time solution of parametrized nonlinear PDEs}

%% use optional labels to link authors explicitly to addresses:
%% \author[label1,label2]{}
%% \affiliation[label1]{organization={},
%%             addressline={},
%%             city={},
%%             postcode={},
%%             state={},
%%             country={}}
%%
%% \affiliation[label2]{organization={},
%%             addressline={},
%%             city={},
%%             postcode={},
%%             state={},
%%             country={}}

% \author[inst1]{Ngoc Cuong Nguyen}

% \affiliation[inst1]{organization={Department of Aeronautics and Astronautics, Massachusetts Institute of Technology},%Department and Organization
%             addressline={77 Massachusetts Avenue}, 
%             city={Cambridge},
%             postcode={02139}, 
%             state={Massachusetts},
%             country={United States}}

\author[inst2]{Ngoc Cuong Nguyen}
%\author[inst2]{Jaime Peraire}

\affiliation[inst2]{organization={Center for Computational Engineering, Department of Aeronautics and Astronautics, Massachusetts Institute of Technology},%Department and Organization
            addressline={77 Massachusetts
Avenue}, 
            city={Cambridge},
            state={MA},
            postcode={02139}, 
            country={USA}}
            
% \affiliation[inst3]{organization={Center for Computing Research, Sandia National Laboratories},%Department and Organization
%             city={Albuquerque},
%             state={NM},
%             postcode={87185}, 
%             country={USA}}
%\affiliation[inst3]{organization={Computational Multiscale Department, Sandia National Laboratories},%Department and Organization
%            addressline={P.O. Box 5800, MS
%1322}, 
%            city={Albuquerque},
%            postcode={87185}, 
%            state={NM},
%            country={USA}}

\begin{abstract}
We present novel model reduction methods for rapid solution of parametrized nonlinear partial differential equations (PDEs) in real-time or many-query contexts. Our approach combines reduced basis (RB) space for rapidly convergent approximation of the parametric solution manifold, Galerkin projection of the underlying PDEs onto the RB space for dimensionality reduction, and high-order empirical interpolation for efficient treatment of the nonlinear terms. We propose a class of high-order empirical interpolation methods to derive basis functions and interpolation points by using high-order partial derivatives of the nonlinear terms. As these methods can generate  high-quality basis functions and interpolation points from a snapshot set of full-order model (FOM) solutions, they  significantly improve the approximation accuracy. We develop effective {\em a posteriori} estimator to quantify the interpolation errors and construct a parameter sample via greedy sampling. Furthermore, we implement two hyperreduction schemes to construct efficient reduced-order models: one that applies the empirical interpolation before Newton's method and another after. The latter scheme shows flexibility in controlling hyperreduction errors. Numerical results are presented to demonstrate the accuracy and efficiency of the proposed methods.
\end{abstract}

%%Graphical abstract
% \begin{graphicalabstract}
% \includegraphics{grabs}
% \end{graphicalabstract}

% %%Research highlights
%\begin{highlights}
%
%\item Efficient reduced-order modeling for nonlinear PDEs in real-time applications
%
%\item High-order interpolation improves accuracy using higher-order partial derivatives.
%
%\item Two hyperreduction schemes are developed to construct efficient ROMs.
%
%\item Greedy sampling enhances ROM accuracy and robustness.
%\end{highlights}

\begin{keyword}
%% keywords here, in the form: keyword \sep keyword
model reduction, \sep reduced-basis method, \sep high-order empirical interpolation  \sep hyperreduction techniques  \sep parametrized PDEs  
%% PACS codes here, in the form: \PACS code \sep code
%\PACS 0000 \sep 1111
%% MSC codes here, in the form: \MSC code \sep code
%% or \MSC[2008] code \sep code (2000 is the default)
%\MSC 0000 \sep 1111
\end{keyword}

\end{frontmatter}

%% \linenumbers

%% main text
\section{Introduction}
\label{sec:intro}

Numerous applications in engineering and science require repeated solutions of parametrized partial differential equations (PDEs) particularly in the context of design, optimization, control, uncertainty quantification. Numerical approximation of the parametrized PDEs can be achieved by classical discretization methods such as finite element (FE), finite difference (FD) and finite volume (FV) methods, which will be referred to as {\em full order models} (FOMs). Since FOMs often require a large number of degrees of freedom to achieve the desired  accuracy, computing many FOM solutions can be too costly. Alleviating this computational burden is the main motivation behind the development of {\em reduced order models} (ROMs) for parametrized PDEs. 

The construction and deployment of ROMs involve two distinct stages \cite{ARCME}. During the offline stage, the parametrized PDE is solved at selected parameter values to obtain an ensemble of Full Order Model (FOM) solutions, which is used to construct a Reduced Basis (RB) space for capturing the  solution manifold of the parametrized PDE. A Galerkin or Petrov-Galerkin projection is then performed to project the FOM onto the RB space to create a ROM. If the PDE is linear in the field variable and affine in the parameter, parameter-independent matrices and vectors of the ROM system can be precomputed and stored. In the online stage, the ROM system can be assembled and solved efficiently for any new parameter values, benefiting from the heavy computational workload completed during the offline stage. This RB approach significantly reduces the computational cost of  the online stage and retains the high accuracy of the FOM. RB methods have been widely used to achieve rapid and accurate solutions of parametrized PDEs \cite{Chen2009, Chen2010a, deparis07, EftangPateraRonquist10,Eftang2012a, Huynh2007a, Huynh2010, PhuongHuynh2013, Karcher2018, Knezevic2011, Knezevic2010,  nguyen04:_handb_mater_model, Nguyen2007, Nguyen_SantaFE08, Calcolo, ARCME, Rozza05_apnum, Sen2006b, veroy04:_certif_navier_stokes, Veroy2002, Vidal-Codina2014, Vidal-Codina2018a}. 

For nonaffine and nonlinear PDEs, an efficient offline-online decomposition can be difficult in the presence of nonaffine and nonlinear terms, often leading to computationally expensive ROMs during the online stage. Efficient treatment of nonffine and nonlinear terms is required to construct an efficient ROM. This can be achieved through various hyperreduction techniques like  empirical interpolation methods (EIM) \cite{Barrault2004,Grepl2007,Nguyen2007,Drohmann2012,Manzoni2012,Hesthaven2014,Hesthaven2022,Chen2021,Eftang2010b}, best-points interpolation method \cite{Nguyen2008a,Nguyen2008d}, generalized empirical interpolation method \cite{Maday2015a,Maday2013},  empirical quadrature methods (EQM) \cite{Patera2017,Hernandez2017,Yano2019a}, energy-conserving sampling and weighting \cite{Farhat2014, Farhat2015}, gappy-POD \cite{everson95karhunenloeve,Galbally2010}, and integral interpolation methods \cite{Carlberg2011,Chaturantabut2011,Drohmann2012, Kerfriden2013,Radermacher2016},  While hyperreduction techniques provide an efficient approximation of the nonaffine and nonlinear terms, they involve additional offline computations, such as calculating extra FOM solutions and constructing interpolation points or quadrature points. These computations, although adding to the offline stage's complexity, ensure that the online stage remains efficient by rendering the online computational complexity independent of the size of the FOM. 

This paper presents model reduction methods for solving parametrized nonlinear PDEs in the real-time or many-query contexts mentioned above. The approach builds on our recent work on the first-order empirical interpolation method \cite{Nguyen2023d,Nguyen2024} and extends it to a class of high-order empirical interpolation methods.  Using high-order partial derivatives of the nonlinear terms, these methods can generate high-quality basis functions and interpolation points from a snapshot set of FOM solutions to significantly improve the approximation accuracy compared to the original EIM method \cite{Barrault2004,Grepl2007,Maday2008c}. We develop an effective {\em a posteriori} estimator to quantify the interpolation errors and construct a parameter sample via greedy sampling. Furthermore, we introduce two distinct hyperreduction schemes to construct efficient ROMs. The first scheme applies hyperreduction to the nonlinear system, whereas the second scheme applies hyperreduction to the linearized system. The second scheme has the potential to eliminate hyperreduction errors and thus provides high accuracy for hyperreduced ROMs. Numerical examples are presented to compare the performance  of various interpolation methods and several model reduction methods. 
 
The paper is organized as follows. We describe model reduction methods for parammetrized nonlinear PDEs in Section 2. In Section 3, we introduce a class of high-order empirical interpolation methods for efficient RB treatment of nonlinear terms. Numerical results are presented in Section 4 to demonstrate our approach. Finally, in Section 5, we provide concluding remarks and suggest directions for future work.

\section{Model Reduction Methods for Parametrized Nonlinear PDEs}

In this section, we describe model reduction methods for parametrized nonlinear PDEs. The traditional Galerkin-Newton method employs Galerkin projection combined with Newton’s method to solve the reduced-order nonlinear system. However, despite its reduced dimensionality, the evaluation of nonlinear terms remains costly due to their dependence on the FOM dimension. We employ two hyperreduction schemes to reduce this computational expense. The first scheme applies hyperreduction before Newton’s method to permit an efficient offline-online computational decomposition. The second scheme applies hyperreduction after Newton’s method to reduce the computational cost of the Newton iterations. Both schemes use empirical interpolation methods to approximate nonlinear terms. The second scheme allows for more flexible and efficient treatment of nonlinear terms.

\subsection{Finite element approximation}

Many applications require repeated, reliable, and real-time predictions of selected performance metrics, or "outputs" $s^{\rm e}$. Here, the superscript "e" denotes "exact," and we will later introduce a full order model (FOM) without a superscript. These outputs are typically functionals of a field variable, $u^{\rm e}(\bm \mu)$, associated with a parametrized partial differential equation (PDE) that describes the underlying physics. The input parameters $\bm \mu$ identify a specific configuration of the component or system, including geometry, material properties, boundary conditions, and loads. 

The abstract formulation can be stated
as follows: given any $\bm \mu \in {\cal D} \subset \mathbb{R}^P$, we
evaluate $s^{\rm e}(\bm \mu) = \ell^O(u^{\rm e}(\bm \mu);\bm \mu)$, where $u^{\rm e}(\bm \mu) \in
X^{\rm e}$ is the solution of
\begin{equation}
a(u^{\rm e}(\bm \mu),v;\bm \mu) = \ell(v; \bm \mu), \quad \forall v \in X^{\rm e}.  
\label{eq:1}
\end{equation}
Here $\mathcal{D}$ is the parameter domain in which our $P$-tuple parameter $\bm \mu$ resides; $X^{\rm e}(\Omega)$ is an appropriate Hilbert space;
$\Omega$ is a bounded domain in $\mathbb{R}^D$ with Lipschitz continuous boundary $\partial \Omega$; $\ell(\cdot;\bm \mu), \ \ell^O(\cdot;\bm \mu)$ are $X^{\rm
  e}$-continuous linear functionals;  and $a(\cdot,\cdot;\bm \mu)$ is a variational form of the parameterized PDE operator. We assume that both $\ell$ and $\ell^O$ are independent of $\bm \mu$. For many second-order PDEs, the variational form $a$ may be expressed as
\begin{equation}
a(w,v; \bm \mu) = \sum_{q=1}^{Q} \Theta^q(\bm \mu) a^q(w,v) + b(w,v; \bm \mu)  
\end{equation} 
where $a^q(\cdot, \cdot)$ are $\bm \mu$-independent bilinear forms, and $\Theta^q(\bm \mu)$ are $\bm \mu$-dependent functions. The variational form $b$ is a nonaffine and nonlinear form which is assumed to be 
\begin{equation}
b(w,v; \bm \mu) = \int_{\Omega} g(w, \bm \mu) v \, d  \bm x + \int_{\Omega} \bm f(w, \bm \mu) \cdot \nabla v \, d  \bm x
\end{equation} 
where $g$ and $\bm f$ are a scalar and vector-valued nonlinear function of $(w,\bm \mu)$, respectively. 

%The problem is said to be {\em affine} in the parameter if these functions are linear .

In actual practice, we replace $u^{\rm e}(\bm \mu)$ with a FOM solution, $u(\bm \mu)$,
which resides in a finite element
approximation space $X \subset X^{\rm e}$ of {\em very} large dimension $\mathcal{N}$ and satisfies
\begin{equation}
a(u(\bm \mu),v;\bm \mu) = \ell(v), \quad \forall v \in X.  
\label{eq:1a}
\end{equation}
We then evaluate the FOM output as
\begin{equation}
s(\bm \mu) = \ell^O(u(\bm \mu)) \ .
\label{eq:1p}
\end{equation}
We shall assume the FOM discretization is sufficiently rich such that $u(\bm \mu)$ and $u^{\rm e}(\bm \mu)$
and hence $s(\bm \mu)$ and $s^{\rm e}(\bm \mu)$ are indistinguishable at the accuracy level of interest. 

%The RB approximation shall be built upon this FOM, and the RB error will thus be evaluated with respect to $u(\bm \mu) \in X$.

% where $a_l$ is an affine bilinear form and $a_n$ is a nonaffine bilinear or a nonlinear form. The affine bilinear form has the following affine 
% \begin{equation}
% a_l(w,v; \mu) = \sum_{q=1}^{Q} \Theta_q(\mu) a_q(w,v),
% \end{equation} 
% where the bilinear forms $a_q$ are independent of $\mu$ and $\Theta_q(\mu)$ 

% In this section, we assume that $a(\cdot,\cdot;\mu)$ is a
% $X^{\rm e}$-continuous bilinear form. In the next section, we extend our approach to nonlinear PDEs in which $a$ is a nonlinear operator in the first argument. The problem is said to be {\em affine} in the parameter if both $a$ and $f$ can be expressed as
% \begin{equation}
% a(w,v; \mu) = \sum_{q=1}^{Q^a} \Theta_q^a(\mu) a_q(w,v), \quad f(v; \mu) = \sum_{q=1}^{Q^f} \Theta_q^f(\mu) f_q(v)
% \end{equation} 
% where $a_q$ and $f_q$ are independent of $\mu$. 

\subsection{Galerkin-Newton method}

%We begin with motivating the generative RB approach
We assume that we are given a parameter sample, $S_N = \{\bm \mu_1 \in {\cal D}
,\cdots, \bm \mu_N \in {\cal D}\}$, and associated RB space $W_N = \mbox{span} \{\zeta_j \equiv 
u(\bm \mu_j), \ 1 \leq j \leq N\}$, where
$u(\bm \mu_j)$ is the solution of~(\ref{eq:1a}) for $\bm \mu = \bm \mu_j$. We then orthonormalize the $\zeta_j, 1 \leq j \leq N,$ with respect to $(\cdot,\cdot)_X$ so that $(\zeta_i,\zeta_j)_X = \delta_{ij}, 1 \leq i,j \leq N$. The RB
approximation would be obtained by a standard Galerkin projection: given
$\bm \mu \in {\cal D}$, we evaluate $s_{N}(\bm \mu) = \ell^O(u_{N}(\bm \mu))$, where $u_{N}(\bm \mu) \in W_N$ is the solution of
\begin{equation}
a(u_{N}(\bm \mu),v; \bm \mu) =  \ell(v), \quad
\forall v \in W_N.   
\label{eq:6-24a}
\end{equation} 
We express $u_N(\bm \mu) = \sum_{j=1}^N \alpha_{N \, j}(\bm \mu) \zeta_j$ and choose
test functions $v = \zeta_n, \ 1 \leq n \leq N$, in~(\ref{eq:6-24a}), to find the coefficient vector $\bm \alpha_N(\bm \mu) \in \mathbb{R}^N$ as the solution of the following  system
\begin{equation}
\left(\sum_{q=1}^Q \Theta^q(\bm \mu) \bm A_N^q \right) \bm \alpha_N(\bm \mu) + \bm b_N(\bm \alpha_N(\bm \mu))  = \bm l_N .
\label{eq:6-24b}
\end{equation} 
Here $\bm A_N^q \in \mathbb{R}^{N \times N}, 1 \le q \le Q,$ and $\bm l_N \in \mathbb{R}^{N}$ have entries
\begin{equation}
A_{N, ij}^q  = a^q(\zeta_i,\zeta_j), \qquad  l_{N, i} =  \ell(\zeta_i) , \qquad 1 \leq i,j \leq N  ,
\label{eq:6-24f}
\end{equation} 
while $\bm b_N(\bm \alpha_N(\bm \mu))  \in \mathbb{R}^{N}$ have entries
\begin{equation}
\label{eq9b}
b_{N, i}(\bm \alpha_N(\bm \mu)) = \int_{\Omega} g(u_N(\bm \mu), \bm \mu) \zeta_i \, d \bm x + \int_{\Omega} \bm f(u_N(\bm \mu),  \bm \mu) \cdot \nabla \zeta_i \, d  \bm x .
\end{equation} 
Note that $\bm A_N^q, 1 \le q \le Q,$ and $\bm L_N$ can be pre-computed and stored in the offline stage, whereas $\bm b_N(\bm \alpha_N(\bm \mu))$ can not be pre-computed because of the nonaffine and nonlinear terms $g(u_N(\bm \mu), \bm \mu)$ and $\bm f(u_N(\bm \mu),  \bm \mu)$. 

%\subsection{Affine Linear PDEs}

If there are no nonaffine and nonlinear terms in the partial differential operators, meaning that $b(w, v) = 0, \forall w, v \in X$, then the problem is said to be an affine linear PDE. In this case, the system (\ref{eq:6-24b}) becomes 
\begin{equation}
\left(\sum_{q=1}^Q \Theta^q(\bm \mu) \bm A_N^q \right) \bm \alpha_N(\bm \mu)  = \bm l_N .
\label{eq:6-24c}
\end{equation} 
This system can be solved for $\bm \alpha_N(\bm \mu)$ within $O(QN^2 + N^3)$ operations for any $\bm \mu \in \mathcal{D}$. The RB output is then evaluated as $s_N(\bm \mu) = (\bm l_N^O)^T \bm \alpha_N(\bm \mu)$, where $l_{N,i}^O = \ell^O(\zeta_i), 1 \le i \le N,$ are pre-computed in the offline stage.

For non-affine and nonlinear PDEs, the system (\ref{eq:6-24b}) is a nonlinear system of equations.  To solve it, we use Newton's method to linearize it at a current iterate $\bar{\bm \alpha}_N(\bm \mu)$. Thus, we find the increment $\delta \bm \alpha_{N}(\bm\mu) \in \mathbb{R}^N$ as the solution of the following linear system
\begin{equation}
\left(\sum_{q=1}^Q \Theta^q(\bm \mu) \bm A_N^q + \bm B_N(\bar{\bm \alpha}_N(\bm \mu))   \right) \delta \bm \alpha_N(\bm \mu) = \bm r_N(\bar{\bm \alpha}_N(\bm \mu)) ,
\label{eq:6a}
\end{equation} 
where
\begin{equation}
\bm r_N(\bar{\bm \alpha}_N(\bm \mu)) = \bm l_N - \bm b_N(\bar{\bm \alpha}_N(\bm \mu)) - \left(\sum_{q=1}^Q \Theta^q(\bm \mu) \bm A_N^q \right) \bar{\bm \alpha}_N(\bm \mu) ,
\label{eq:6b}
\end{equation} 
and $\bm B_N(\bar{\bm \alpha}_N(\bm \mu)) \in \mathbb{R}^{N \times N}$ has entries
\begin{equation}
B_{N, ij}(\bar{\bm \alpha}_N(\bm \mu)) = \int_{\Omega} \left( g_u(\bar{u}_N(\bm \mu), \bm \mu) \zeta_i + \bm f_u(\bar{u}_N(\bm \mu),  \bm \mu) \cdot \nabla \zeta_i  \right) \zeta_j \, d  \bm x .
\end{equation} 
Here $g_u$ and $\bm f_u$ denote the partial derivatives of $g$ and $\bm f$ with respect to the first argument, respectively. Both the matrix $\bm B_N(\bar{\bm \alpha}_N(\bm \mu))$ and the vector $\bm b_N(\bar{\bm \alpha}_N(\bm \mu))$ must be computed at each Newton iteration since they depend on the current iterate $\bar{u}_N(\bm \mu)$. However, they are computationally expensive to form because each of their entries is an integral over the entire physical domain. In particular, there are $(N + N^2)$ integrals to be evaluated at each Newton iteration. Computing each of these integrals incurs a high computational cost that scales with the dimension of the FE approximation. Although the linear system (\ref{eq:6a}) is small, it is computationally intensive to solve. As a result, the RB method does not offer a significant speedup over the FE method.

\subsection{Hyperreduction--Galerkin-Newton method}

%We devise two different hyper-reduction strategies to recover efficient ROMs for nonaffine and nonlinear PDEs. 

The method applies hyperreduction to the nonlinear integrals in  the nonlinear system (\ref{eq:6-24b}) and uses Newton method to linearize the resulting system. To this end, we approximate the integrand $g(u_N(\bm \mu), \bm \mu)$ with the following function
\begin{equation}
g_M(\bm x, \bm \mu) = \sum_{m=1}^{M} \beta_m(\bm \mu) \psi_m (\bm x), 
\end{equation} 
where
\begin{equation}
\label{eq15b}
\sum_{m=1}^{M}  \psi_m (\bm y_k) \beta_m(\bm \mu) = g(u_N(\bm y_k, \bm \mu), \bm \mu), \qquad 1 \le k \le M .
%\bm \beta_M(\bm \mu) = (\bm B^{ g}_M)^{-1} \bm b_M^{ g}(\bm \mu) . 
\end{equation}
Here $\{\psi_m(\bm x)\}_{m=1}^M$ is a set of interpolation functions and $\{\bm y_m \in \Omega\}_{m=1}^M$ is a set of interpolation points. Similarly,  we approximate the integrand $\bm f(u_N(\bm \mu),  \bm \mu)$ with the following vector-valued function
\begin{equation}
f^d_K(\bm x, \bm \mu) = \sum_{k=1}^{K^d} \gamma^{d}_k(\bm \mu) \phi^{d}_k (\bm x),  \qquad 1 \le d \le D,  
\end{equation} 
where
\begin{equation}
\label{eq17b}
\sum_{k=1}^{K^d}  \phi^d_k (\bm z_m^d) \gamma^d_k(\bm \mu) = f^d(u_N(\bm z_m^d, \bm \mu), \bm \mu), \qquad 1 \le m \le K^d .
\end{equation}
Here $\{\phi_k^d(\bm x)\}_{k=1}^{K^d}$ is a set of interpolation functions and $\{\bm z^d_m \in \Omega\}_{k=1}^{K^d}$ is a set of interpolation points, which are used to define the interpolant $f^d_K(\bm x, \bm \mu)$ for the $d$th component of $\bm f(u_N(\bm \mu),  \bm \mu)$. We will discuss the construction of   interpolation functions and the selection of interpolation points in the next section.

Therefore, we can approximate the nonlinear term $\bm b_{N}(\bm \alpha_N(\bm \mu))$ in (\ref{eq9b}) with
\begin{equation}
\label{eq18b}
\tilde{b}_{N,i}(\bm \alpha_N(\bm \mu)) = \sum_{m=1}^{M} \beta_m(\bm \mu)  \int_{\Omega} \psi_m\zeta_i \, d \bm x + \sum_{d=1}^D \sum_{k=1}^{K^d} \gamma^{d}_k(\bm \mu)  \int_{\Omega} \phi_k^{d}  \frac{\partial  \zeta_i}{\partial x_d} \, d  \bm x .
\end{equation} 
Substituting (\ref{eq15b}) and (\ref{eq17b}) into (\ref{eq18b}) yields the following result in the matrix notation 
\begin{equation}
\label{eq19b}
\tilde{\bm b}_{N}(\bm \alpha_N(\bm \mu)) = \bm C_{NM} \, g( \bm Q_{M N} \bm \alpha_N(\bm \mu)) +  \sum_{d=1}^D  \bm D_{NK}^d f^d( \bm S^d_{K N} \bm \alpha_N(\bm \mu)) , 
\end{equation} 
where $\bm C_{NM} = \bm E_{NM} (\bm H_M)^{-1}$, $\bm D^d_{NK} = \bm F^d_{NK} (\bm G^d_K)^{-1}$, and
\begin{equation}
\begin{split}
E_{NM, im} & = \int_{\Omega} \psi_m\zeta_i \, d \bm x, \quad H_{M, m k} = \psi_m(\bm y_k), \quad  Q_{M N, m j} = \zeta_j(\bm y_m) , \\
F_{NK, ik}^d & = \int_{\Omega} \phi_k^{d}  \frac{\partial  \zeta_i}{\partial x_d} \, d  \bm x, \quad G^d_{K, m k} = \phi_k(\bm z_m^d), \quad  S^d_{K N, k j} = \zeta_j(\bm z^d_k) .
\end{split}
\end{equation} 
By replacing $\bm b_{N}(\bm \alpha_N(\bm \mu))$ with $\tilde{\bm b}_{N}(\bm \alpha_N(\bm \mu))$ in (\ref{eq:6-24b}), we obtain the following nonlinear system of equations 
\begin{multline}
\left(\sum_{q=1}^Q \Theta^q(\bm \mu) \bm A_N^q \right) \bm \alpha_N(\bm \mu) + \bm C_{NM} \, g( \bm Q_{M N} \bm \alpha_N(\bm \mu)) \\ +  \sum_{d=1}^D  \bm D_{NK}^d f^d( \bm S^d_{K N} \bm \alpha_N(\bm \mu))  = \bm l_N .
\label{eq21b}
\end{multline} 
Since this nonlinear system is purely algebraic and small, it can be solved efficiently by using Newton method. 

The offline and online stages of the Hyperreduction--Galerkin-Newton (H-GN) method are summarized in Algorithm 1 and Algorithm 2, respectively. The offline stage is expensive and performed once. All the quantities computed in the offline stage are independent of  $\bm \mu$.  In the online stage, the RB output $s_N(\bm \mu)$ is calculated for any $\bm \mu \in \mathcal{D}$. The computational cost of the online stage is $O(N^3 + (Q + M + \sum_{d=1}^D K^d) N^2)$ for each Newton iteration. 
Hence, as required in the many-query or real-time contexts, the online complexity is independent of $\mathcal{N}$, which is the dimension of the FOM. Thus, we expect computational savings of several orders of magnitude relative to both the FOM and the Galerkin-Newton method described earlier. 

%It is important to note that the computational complexity scales linearly with $M$.  Therefore, it is advantageous to increase $M$ to make the resulting ROM more accurate. %The online stage can be executed many times.  The online computational complexity of solving the system is $O(N^3 + (Q + M + \sum_{d=1}^D K^d) N^2)$ for each Newton iteration. 

\begin{algorithm}
\begin{algorithmic}[1]
\REQUIRE{The parameter sample set $S_N = \{\bm \mu_j, 1 \le j \le N\}$.}
\ENSURE{$\bm l_N, \bm l_N^O, \bm A_N^q, \bm C_{NM}$, $\bm Q_{MN}$, $\bm D_{MK}^d$, $\bm S_{KN}^d$.}
\STATE{Solve the parametric FOM (\ref{eq:1a}) for each $\bm \mu_j \in S_N$ to obtain $u(\bm \mu_j)$.}
\STATE{Construct a RB space $W_N^u = \mbox{span} \{\zeta_j \equiv  u(\bm \mu_j), \ 1 \leq j \leq N\}$.}
\STATE{Construct $\{\psi_m(\bm x)\}_{m=1}^M$ and $\{\bm y_m \in \Omega\}_{m=1}^M$ to interpolate $g(u_N(\bm \mu), \bm \mu)$.}
\STATE{Construct $\{\phi_k^d(\bm x)\}_{k=1}^{K^d}$ and $\{\bm z^d_m \in \Omega\}_{k=1}^{K^d}$ to interpolate $f^d(u_N(\bm \mu),  \bm \mu)$.}
\STATE{Form and store $\bm l_N, \bm l_N^O, \bm A_N^q, \bm C_{NM}$, $\bm Q_{MN}$, $\bm D_{MK}^d$, $\bm S_{KN}^d$.}
\end{algorithmic}
\caption{Offline stage of the Hyperreduction--Galerkin-Newton method.}
\end{algorithm}

\begin{algorithm}
\begin{algorithmic}[1]
\REQUIRE{Parameter point $\bm \mu \in \mathcal{D}$ and initial guess $\bar{\bm \alpha}_N(\bm \mu)$.}
\ENSURE{RB output $s_{N}(\bm \mu)$ and updated coefficients $\bar{\bm{\alpha}}_N(\bm \mu)$.}
\STATE{Linearize (\ref{eq21b}) around $\bar{\bm{\alpha}}_N(\bm \mu)$.}
\STATE{Solve the resulting linear system to obtain $\delta {\bm \alpha}_N(\bm \mu)$.}
\STATE{Update $\bar{\bm \alpha}_N(\bm \mu) = \bar{\bm \alpha}_N(\bm \mu) + \delta {\bm \alpha}_N(\bm \mu)$.}
\STATE{If $\|\delta {\bm \alpha}_N(\bm \mu)\| \le \epsilon$, then calculate $s_{N}(\bm \mu) = (\bm l_N^O)^T \bar{\bm \alpha}_N(\bm \mu)$ and stop.
}
\STATE{Otherwise, go back to Step 1.}
\end{algorithmic}
\caption{Online stage of the Hyperreduction--Galerkin-Newton method.}
\end{algorithm}

\subsection{Newton-Gakerin--Hypereduction method}

This method applies hyperreduction to the linearized system (\ref{eq:6a}) of the Newton-Gakerin method. For each Newton iteration, the method solves the following linear system
\begin{equation}
\left(\sum_{q=1}^Q \Theta^q(\bm \mu) \bm A_N^q + \widetilde{\bm B}_N(\bar{\bm \alpha}_N(\bm \mu))   \right) \delta \bm \alpha_N(\bm \mu) = \tilde{\bm r}_N(\bar{\bm \alpha}_N(\bm \mu)) ,
\label{eq:61a}
\end{equation} 
where
\begin{equation}
\tilde{\bm r}_N(\bar{\bm \alpha}_N(\bm \mu)) = \bm l_N - \tilde{\bm b}_N(\bar{\bm \alpha}_N(\bm \mu)) - \left(\sum_{q=1}^Q \Theta^q(\bm \mu) \bm A_N^q \right) \bar{\bm \alpha}_N(\bm \mu) .
\label{eq:61b}
\end{equation} 
The vector $\tilde{\bm b}_N(\bar{\bm \alpha}_N(\bm \mu))$ is computed by (\ref{eq19b}), and the matrix  $\widetilde{\bm B}_N(\bar{\bm \alpha}_N(\bm \mu)) \in \mathbb{R}^{N \times N}$ has entries
\begin{multline}
\widetilde{B}_{N, ij}(\bar{\bm \alpha}_N(\bm \mu)) = \sum_{m=1}^{M^{g_u}} \beta_m^{g_u}(\bar{\bm \alpha}_N(\bm \mu)) 
 \int_{\Omega} \psi_m^{g_u} \zeta_i \zeta_j \, d  \bm x \, + \\ \sum_{d=1}^D  \sum_{k=1}^{K^{f_u^d}} \gamma^{f_u^d}_k(\bar{\bm \alpha}_N(\bm \mu)) 
 \int_{\Omega} \phi_{k}^{f_u^d} \frac{\partial  \zeta_i}{\partial x_d} \zeta_j \, d  \bm x ,
\end{multline} 
where
\begin{equation}
\label{eq151b}
\sum_{m=1}^{M^{g_u}}  \psi_m^{g_u} (\bm y_k^{g_u}) \beta_m^{g_u}(\bm \mu) = g_u(\bar{u}_N(\bm y_k^{g_u}, \bm \mu),  \bm \mu), \qquad 1 \le k \le M^{g_u} .
\end{equation}
\begin{equation}
\label{eq171b}
\sum_{k=1}^{K^{f_u^d}}  \phi^{f_u^d}_k (\bm z_m^{f_u^d}) \gamma^{f_u^d}_k(\bm \mu) = f_u(\bar{u}_N(\bm z_m^{f_u^d}, \bm \mu),  \bm \mu), \qquad 1 \le m \le K^{f_u^d} .
\end{equation}
Here $\{\psi^{g_u}_m(\bm x)\}_{m=1}^{M^{g_u}}$ and $\{\bm y^{g_u}_m \in \Omega\}_{m=1}^{M^{g_u}}$ are used to define the interpolation of $g_u(u_N(\bm \mu),  \bm \mu)$, while $\{\phi_k^{f_u^d}(\bm x)\}_{k=1}^{K^{f_u^d}}$ and $\{\bm z^{f_u^d}_m \in \Omega\}_{k=1}^{K^{f_u^d}}$ are used to define the interpolation of $f_u^d(u_N(\bm \mu), \bm \mu)$. Thus, the matrix  $\widetilde{\bm B}_N(\bar{\bm \alpha}_N(\bm \mu))$ can be expressed as a sum of parameter-independent matrices as follows
\begin{equation}
\widetilde{\bm B}_{N}(\bar{\bm \alpha}_N(\bm \mu)) = \sum_{m=1}^{M^{g_u}} \beta_m^{g_u}(\bar{\bm \alpha}_N(\bm \mu)) \bm C_m^{g_u} + \sum_{d=1}^D  \sum_{k=1}^{K^{f_u^d}} \gamma^{f_u^d}_k(\bar{\bm \alpha}_N(\bm \mu)) \bm D_k^{f^d_u} 
\label{eq27b}
\end{equation} 
where $C_{mij}^{g_u} = \int_{\Omega} \psi_m^{g_u} \zeta_i \zeta_j \, d  \bm x$ and $D_{kij}^{f^d_u} = \int_{\Omega} \phi_{k}^{f_u^d} \frac{\partial  \zeta_i}{\partial x_d} \zeta_j \, d  \bm x$ are independent of $\bm \mu$.

The offline and online stages of the Galerkin-Newton--Hyperreduction (GN-H) method are summarized in Algorithm 3 and Algorithm 4, respectively. In addition to approximating the nonlinear functions, the method also approximates the partial derivative of the functions with respect to the field variable and thus differs from the H-GN method which approximates the nonlinear functions only. For each Newton iteration during the online stage, the operation counts include $O((M + \sum_{d=1}^D K^{d}) N)$ to form $\tilde{\bm r}_N(\bar{\bm \alpha}_N(\bm \mu))$, $O((Q + M^{g_u} + \sum_{d=1}^D K^{f^d_u}) N^2)$ to form $\widetilde{\bm B}_N(\bar{\bm \alpha}_N(\bm \mu))$, and $O(N^3)$ to solve the reduced system (\ref{eq:61a}). Hence, the computational complexity of the online stage is $O(N^3 + (Q + M^{g_u} + \sum_{d=1}^D K^{f^d_u}) N^2 + (M + \sum_{d=1}^D K^{d}) N)$ per Newton iteration. Specifically, the GN-H method allows 
$M$ and $K^d$  to be chosen large enough to accurately approximate the residual vector without significantly increasing computational cost. When the residual vector is well approximated, resulting in negligible hyperreduction error, the GN-H method can achieve accuracy comparable to the GN method, making it more efficient and accurate than the H-GN method.

\begin{algorithm}
\begin{algorithmic}[1]
\REQUIRE{The parameter sample set $S_N = \{\bm \mu_j, 1 \le j \le N\}$.}
\ENSURE{$\bm l_N, \bm l_N^O, \bm A_N^q, \bm C_{NM}$, $\bm Q_{MN}$, $\bm D_{MK}^d$, $\bm S_{KN}^d$.}
\STATE{Solve the parametric FOM (\ref{eq:1a}) for each $\bm \mu_j \in S_N$ to obtain $u(\bm \mu_j)$.}
\STATE{Construct a RB space $W_N = \mbox{span} \{\zeta_j \equiv  u(\bm \mu_j), \ 1 \leq j \leq N\}$.}
\STATE{Construct $\{\psi_m\}_{m=1}^M$ and $\{\bm y_m \in \Omega\}_{m=1}^M$ to interpolate $g(u_N(\bm \mu),  \bm \mu)$.}
\STATE{Construct $\{\psi_m^{g_u}\}_{m=1}^{M^{g_u}}$ and $\{\bm y_m^{g_u} \in \Omega\}_{m=1}^{M^{g_u}}$ to interpolate $g_u(u_N(\bm \mu), \bm \mu)$.}
\STATE{Construct $\{\phi_k^d\}_{k=1}^{K^d}$ and $\{\bm z^d_m \in \Omega\}_{k=1}^{K^d}$ to interpolate $f^d(u_N(\bm \mu),  \bm \mu)$.}
\STATE{Construct $\{\phi_k^{f^d_u}\}_{k=1}^{K^{f^d_u}}$ and $\{\bm z^{f^d_u}_m \in \Omega\}_{k=1}^{K^{f^d_u}}$ to interpolate $f^d_u(u_N(\bm \mu),  \bm \mu)$.}
\STATE{Form and store $\bm l_N, \bm l_N^O, \bm A_N^q, \bm C_{NM}$, $\bm Q_{MN}$, $\bm D_{MK}^d$, $\bm S_{KN}^d$.}
\end{algorithmic}
\caption{Offline stage of the Galerkin-Newton--Hyperreduction method.}
\end{algorithm}

\begin{algorithm}
\begin{algorithmic}[1]
\REQUIRE{Parameter point $\bm \mu \in \mathcal{D}$ and initial guess $\bar{\bm \alpha}_N(\bm \mu)$.}
\ENSURE{RB output $s_{N}(\bm \mu)$ and updated coefficients $\bar{\bm{\alpha}}_N(\bm \mu)$.}
\STATE{Compute $\tilde{\bm b}_N(\bar{\bm \alpha}_N(\bm \mu))$ from (\ref{eq19b}).}
\STATE{Compute $\tilde{\bm r}_N(\bar{\bm \alpha}_N(\bm \mu))$ from (\ref{eq:61b}).}
\STATE{Solve (\ref{eq151b}) and (\ref{eq171b}) for $\beta_m^{g_u}(\bm \mu)$ and $\gamma_k^{f_u^d}(\bm \mu)$, respectively.}
\STATE{Compute $\widetilde{\bm B}_N(\bar{\bm \alpha}_N(\bm \mu))$ from (\ref{eq27b}).}
\STATE{Solve the linear system (\ref{eq:61a}) to obtain $\delta {\bm \alpha}_N(\bm \mu)$.}
\STATE{Update $\bar{\bm \alpha}_N(\bm \mu) = \bar{\bm \alpha}_N(\bm \mu) + \delta {\bm \alpha}_N(\bm \mu)$.}
\STATE{If $\|\delta {\bm \alpha}_N(\bm \mu)\| \le \epsilon$, then calculate $s_{N}(\bm \mu) = (\bm l_N^O)^T \bar{\bm \alpha}_N(\bm \mu)$ and stop.
}
\STATE{Otherwise, go back to Step 1.}
\end{algorithmic}
\caption{Online stage of the Galerkin-Newton--Hyperreduction method.}
\end{algorithm}

\section{High-Order Empirical Interpolation Methods}

The  empirical interpolation method (EIM) was first introduced in \cite{Barrault2004} for constructing basis functions and interpolation points to approximate parameter-dependent functions, and developing efficient RB approximation of non-affine PDEs. Shortly later, the empirical interpolation method was extended to develop efficient ROMs for nonlinear PDEs \cite{Grepl2007}. Since the pioneer work \cite{Barrault2004, Grepl2007}, the EIM has been widely used to construct efficient ROMs of nonaffine and nonlinear PDEs for  different applications \cite{Grepl2007,Nguyen2007,Galbally2010,Drohmann2012,Manzoni2012,Hesthaven2014,Kramer2019,Hesthaven2022,Chen2021}. Rigorous a posteriori error bounds for the empirical interpolation method is developed by Eftang et al. \cite{Eftang2010b}. Several attempts have been made to extend the EIM in diverse ways. The best-points interpolation method (BPIM) \cite{Nguyen2008a,Nguyen2008d} employs proper orthgogonal decomposition to generate the basis set and least-squares method to compute the interpolation point set.  Generalized empirical interpolation method (GEIM) \cite{Maday2015a,Maday2013} generalizes the EIM concept by replacing the pointwise function evaluations by more general measures defined as linear functionals. 

%The discrete empirical interpolation method (DEIM) \cite{Chaturantabut2010} is a discrete variant of the empirical interpolation method.
%The  empirical interpolation method (EIM) \cite{Barrault2004a,Grepl2007a}  was proposed as an efficient model reduction technique to render the complexity of evaluating the nonlinear terms independent of the dimension of the truth approximation. 

The main idea in the  empirical interpolation method is to replace any nonlinear term with a reduced basis expansion expressed as a linear combination of pre-computed basis functions and parameter-dependent coefficients. The coefficients are determined efficiently by an inexpensive and stable interpolation procedure. In the empirical interpolation method, the basis functions are instances of the nonlinear function at $N$ parameter points in a sample set $S_N$. Therefore, the number of basis functions does not exceed $N$. In order to improve the approximation accuracy, we must increase $N$ at the expense of increasing the offline cost because we need to solve the FOM for all parameter points in $S_N$. ROMs via EIM have been shown to suffer from instabilities in certain situations \cite{Peherstorfer2020}. Adaptation \cite{Eftang2012b} and localization \cite{Peherstorfer2014} of the low-dimensional subspaces have been proposed to improve the stability of ROMs via empirical interpolation. Oversampling uses more interpolation points than basis functions so that the nonlinear terms are approximated via least-squares regression rather than via interpolation \cite{Peherstorfer2020}.  Oversampling methods such as  gappy POD \cite{everson95karhunenloeve,willcox06:_gappy}, missing point estimation \cite{Astrid2008,Zimmermann2016}, Gauss-Newton with approximated tensors \cite{Carlberg2013}, and GEIM \cite{Argaud2017} can provide more stable and accurate approximations than empirical interpolation especially when the samples are perturbed due to noise turbulence, and numerical inaccuracies. While the number of interpolation points can be greater than $N$, the number of basis functions still does not exceed $N$. Hence, oversampling shows a limited improvement over the interpolation approach. 

%Our approach is based on the Taylor expansion. expand the dimension of the interpolating subspace and increase the number of interpolation points. In particular, we

 The first-order empirical interpolation method (FOEIM), introduced in \cite{Nguyen2023d,Nguyen2024}, enhances the EIM by utilizing the partial derivatives of a parametrized nonlinear function to generate additional basis functions and interpolation points. This method can construct up to $N^2$ basis functions and interpolation points from a given sample set $S_N$, greatly increasing the approximation power without requiring extra FOM solutions. FOEIM significantly improves the accuracy of hyper-reduced ROMs, especially in capturing complex nonlinear behaviors, while maintaining computational efficiency. 

In this section, we introduce a class of high-order empirical interpolation methods to generate up to $N^{p+1}$  functions and interpolation points by leveraging $p^{\rm th}$-order partial derivatives of the parametrized nonlinear function. As the number of functions scales exponentially with $p$, we use proper orthogonal decomposition to compute the basis functions. We develop effective {\em a posteriori} estimator to quantify the interpolation errors and construct the parameter sample $S_N$ via greedy sampling. Using high-order partial derivatives, high-order EIM provides a more flexible and accurate alternative to the original EIM and FOEIM.

 %We introduce a class of high-order empirical interpolation methods by using high-order derivatives. Th

 \subsection{Interpolation procedure}

We aim to interpolate the nonlinear function $g(u(\bm x, \bm \mu), \bm \mu)$ using a set of basis functions $\Psi_M = \mbox{span} \{\psi_m(\bm x), 1 \le m \le M\}$ and a set of interpolation points $T_M = \{{\bm y}_1, \ldots, {\bm y}_M\}$. The interpolant $g_M(\bm x, \bm \mu)$ is given by 
\begin{equation}
\label{eq1w}
g_M(\bm x, \bm \mu) = \sum_{m=1}^M \beta_{M,m}(\bm \mu) \psi_m(\bm x)   ,
\end{equation}
where the coefficients $\beta_{M,m}(\bm \mu), 1 \le m \le M,$ are found as the solution of the following linear system 
\begin{equation}
\label{eq2w}
\sum_{m=1}^M  \psi_m({\bm y}_k)   \beta_{M,m}(\bm \mu) = g(u({\bm y}_k, \bm \mu),  \bm \mu), \quad 1 \le k \le M .
\end{equation}
It is convenient to compute the coefficient vector $\bm \beta_M(\bm \mu)$ as follows
\begin{equation}
\label{eqcoeff}
\bm \beta_M(\bm \mu) = \bm B^{-1}_M \bm b_M(\bm \mu),
\end{equation}
where $\bm B_M \in \mathbb{R}^{M \times M}$ has entries $B_{M,km} =  \psi_m({\bm y}_k)$ and $\bm b_M(\bm \mu) \in \mathbb{R}^M$ has entries $b_{M,k}(\bm \mu) =  g(u({\bm y}_k, \bm \mu),  \bm \mu)$. The interpolation error is defined as
\begin{equation}
\label{eq4w}
\varepsilon_M(\bm \mu) = \|g(u(\bm x, \bm \mu), \bm \mu) - g_M(\bm x, \bm \mu)\|_{L^\infty(\Omega)} .     
\end{equation}
The complexity of computing the coefficient vector  $\bm \beta_M(\bm \mu)$ in (\ref{eqcoeff}) for any given $\bm \mu$ is $O({M^2})$  because the matrix $\bm B_M^{-1}$ is pre-computed and stored.

The approximation accuracy depends critically on both the  subspace $\Psi_M$ and the interpolation point set $T_M$.   In what follows, we describe high-order empirical interpolation methods for constructing $\Psi_M$ and $T_M$.

\subsection{Basis functions}

For a given sample set $S_N = \{\bm \mu_n, 1 \le n \le N\}$, we introduce the following  spaces 
\begin{subequations}
\label{EIMspaces}
\begin{alignat}{2}
W_N & = \mbox{span} \{\zeta_n(\bm x) \equiv u(\bm x, \bm \mu_n), 1 \le n \le N\}, \\
V_N^g  & = \mbox{span} \{\xi_n(\bm x) \equiv g(\zeta_n(\bm x), \bm \mu_n), 1 \le n \le N\} .
\end{alignat}
\end{subequations}
These  spaces are constructed by solving the underlying FOM (\ref{eq:1a}) for each $\bm \mu_n \in S_N$. The first-order empirical interpolation method is based on the first-order Taylor expansion of $g(u, \bm \mu)$ at $(\zeta, \bm \eta)$, which is defined as follows
\begin{equation}
G((u, \bm \mu), (\zeta, \bm \eta)) = g(\zeta, \bm \eta) + \frac{\partial g(\zeta, \bm \eta)}{\partial u} (u - \zeta) + \frac{\partial g(\zeta, \bm \eta)}{\partial \bm \mu} \cdot (\bm \mu - \bm \eta)  .
\end{equation}
Taking $(u, \bm \mu) = (\zeta_m, \bm \mu_m)$ and $(\zeta, \bm \eta) = (\zeta_n, \bm \mu_n)$, where $(\zeta_m,\zeta_n)$ are any pair of two functions in $W_N$ and $(\bm \mu_m,\bm \mu_n)$ are any pair of two parameter points in $S_N$, we arrive at
\begin{equation}
\label{taylor}
G((\zeta_m, \bm \mu_m), (\zeta_n, \bm \mu_n)) = g(\zeta_n,  \bm \mu_n) + \frac{\partial g(\zeta_n, \bm \mu_n)}{\partial u} (\zeta_m - \zeta_n) + \frac{\partial g(\zeta_n, \bm \mu_n)}{ \partial \bm \mu} \cdot (\bm \mu_m - \bm \mu_n)  .
\end{equation}
for $m,n = 1,\ldots, N$. These functions define the following Lagrange-Taylor space
\begin{equation}
\label{LTspace}
V_{N^2}^{g}   = \mbox{span} \{ \varrho_k \equiv G((\zeta_m, \bm \mu_m), (\zeta_n, \bm \mu_n)), \ k = m + N(n-1), \ 1 \le m,n \le N\}  .
\end{equation}
Note that we have $V_N^g \subset V^g_{N^2}$. Furthermore, $V_{N^2}^{g}$ can be significantly  richer than $V_N^g$ as it contains considerably more basis functions.

In addition to first-order derivatives, we can incorporate second-order partial derivatives to further enhance the accuracy and flexibility of the  method. By utilizing second-order derivatives, we can generate even more refined basis functions and interpolation points to provide a more accurate approximation of the nonlinear function. Specifically, incorporating second-order partial derivatives yields the following functions     
\begin{multline}
\label{taylor2}
G((\zeta_m, \bm \mu_m), (\zeta_n, \bm \mu_n), (\zeta_k, \bm \mu_k)) = g(\zeta_n,  \bm \mu_n) + \frac{\partial g(\zeta_n, \bm \mu_n)}{\partial u} (\zeta_m - \zeta_n) \, + \\  \frac{\partial g(\zeta_n, \bm \mu_n)}{ \partial \bm \mu} \cdot (\bm \mu_m - \bm \mu_n) + \frac{1}{2} \frac{\partial^2 g(\zeta_n, \bm \mu_n)}{\partial u^2} (\zeta_k - \zeta_n) (\zeta_m - \zeta_n) \, + \\ 
\frac{1}{2} (\bm \mu_k - \bm \mu_n)^T \frac{\partial^2 g(\zeta_n, \bm \mu_n)}{ \partial \bm \mu^2}  (\bm \mu_m - \bm \mu_n) 
\end{multline}
for $1 \le k,m,n \le N$. Let $V^g_{N^3}$ be the space spanned by those functions. Then we have $V_N^g \subset V_{N^2}^g \subset V_{N^3}^g$. Incorporating second-order partial derivatives is particularly beneficial when dealing with systems where higher-order nonlinearities play a significant roles. It can lead to significant improvements in the stability and accuracy of hyper-reduced ROMs, especially when applied to highly nonlinear or stiff systems. Furthermore, the inclusion of second-order information does not require additional FOM evaluations, making it computationally efficient. The method relies solely on the partial derivatives of the already available snapshots, maintaining the advantage of reducing the offline computational cost.

% If the function $g$ is explicitly dependent on both $u$ and $\bm \mu$, then  we define the LT space as $W_{N^2}^{g}   = \mbox{span} \{\eta_k, 1 \le k \le N^2\}$, where 
% \begin{equation}
% \label{taylor2}
% \eta_k = g(\zeta_n, \bm \mu_n) + \frac{\partial g(\zeta_n, \bm \mu_n)}{\partial u} (\zeta_m - \zeta_n) + \frac{\partial g(\zeta_n, \bm \mu_n)}{ \partial \bm \mu} \cdot (\bm \mu_m - \bm \mu_n) . 
% \end{equation}
% for $k = m + N(n-1)$ and $m,n = 1,\ldots, N$. Here $\bm \mu_n, 1 \le n \le N,$ are the parameter points in the sample set $S_N$. 

%As the method can generate a large number of  functions ($N^2$ functions if using only the first-order derivatives and $N^3$ if using the first-order and second-order derivatives), it is necessary to compress them. Without compression, the large basis set can lead to increased computational cost and reduced efficiency during the online stage. Here we use the proper orthogonal decomposition (POD) to generate an orthogonal basis set from a snapshot set $\{\varrho_k\}_{k=1}^K$, where $K$ can be $N^2$ or $N^3$. 

As the method can generate a large number of functions (up to $N^2$  using first-order derivatives and 
$N^3$ when including second-order derivatives), it becomes necessary to compress this large set of functions to avoid excessive computational cost. To address this, we employ proper orthogonal decomposition (POD), which generates an optimal reduced orthogonal basis from a set of snapshots $\{\varrho_k\}_{k=1}^K$, where $K$ represents the total number of basis functions and can scale to $N^2$ or $N^3$. By applying POD, we can limit the dimensionality of the basis set to a manageable size by retaining the dominant modes, thus ensuring that the online stage can be executed rapidly without compromising the ROM accuracy. 

The POD determines the basis set to minimize the following mean squared error:
\begin{equation}
\min \sum_{i=1}^{K}  \Big\|\varrho_{i} -  \frac{(\varrho_{i}, \varphi)}{\|\varphi\|^2} \varphi \Big\|^2 \ ,
\label{eq3a:8}
\end{equation} 
% This minimization problem is equivalent to the maximization problem~
% \begin{equation}
% \max   \sum_{i=1}^{N^2}   |\left(\varphi, \eta_{i}\right)|^2
% \label{eq3a:4}
% \end{equation}
subject to the constraints $\|\varphi\|^2 = 1$. It is shown in \cite{Holmes2012} (see Chapter 3) that the problem~(\ref{eq3a:8}) is equivalent to solving the eigenfunction equation 
\begin{equation}
\frac{1}{K} \sum_{i=1}^{K}   (\varrho_{i}, \varphi) \varrho_{i} = \lambda \, \varphi .
\label{eq3a:5}
\end{equation}
%As the optimal basis functions are given by the eigenfunctions of the eigenfunction equation, they are  called empirical eigenfunctions or POD modes. 
The method of snapshots~\cite{sirovich87:_turbul_dynam_coher_struc_part} expresses a typical eigenfunction $\varphi$ as a linear combination of the snapshots 
\begin{equation}
\varphi =  \sum_{j=1}^{K}   a_{j} \varrho_{j} \ .
\label{eq3a:6}
\end{equation}
Inserting (\ref{eq3a:6}) into~(\ref{eq3a:5}), we  obtain the following eigenvalue problem
\begin{equation}
\bm C \bm a = \lambda \bm a \ ,
\label{eq3a:7}
\end{equation}
where $\bm C \in \mathbb{R}^{K \times K}$ is known as the correlation matrix with entries $C_{ii'}  = \frac{1}{K} \left(\varrho_{i},\varrho_{j} \right), 1 \le i, j \le K$. The eigenproblem~(\ref{eq3a:7}) can then be solved for the first $M$ eigenvalues and eigenvectors from which the POD basis functions $\varphi_m, 1 \le m \le M,$ are constructed by~(\ref{eq3a:6}). We then introduce the following space
\begin{equation}
\Phi_M := \mbox{span} \{\phi_m = \sqrt{\lambda}_m \varphi_m, \quad 1 \le m \le  M\} .
\label{eq3a:7qq}
\end{equation}
Note that the eigenvalues follow the descending order $\lambda_1 \ge \lambda_2 \ge \ldots \ge \lambda_M$. Typically, $M$ is chosen significantly less than $K$. This dimensionality reduction drastically reduces the number of basis functions required to achieve accurate approximation, thereby allowing for faster computations and reduced memory usage during the online stage.

%The integer $M$ is chosen as the smallest integer such that $\sum_{k=K+1}^{N^2} \lambda_k / \sum_{n=1}^{N^2} \lambda_n \le \epsilon$ for some $\epsilon$ small. 
%The sum of all of the eigenvalues represents the total energy content of the snapshot set. If we want to capture at least 99\% of the energy, then we must choose the smallest integer $K$ such that $\sum_{k=1}^{K} \lambda_k / \sum_{n=1}^{N^2} \lambda_n \ge 0.99$.

%POD captures the most dominant modes in the LT space to enable accurate representation with fewer basis functions. In particular, the LT-POD space $\Phi_K^{g}$ can accurately approximate any function in $W_{N^2}^{g}$ even though the dimension $K$ may be significantly less than $N^2$. We will apply the EIM procedure to $\Phi_K^{g}$ instead of $W_{N^2}^{g}$. Hence, POD reduces the computational cost of the EIM procedure by a factor of $N^2/K$, while generating optimal basis functions. %To simplify notation, we shall drop the superscript $\partial^L$ in the remainder of this chapter.  

\subsection{Interpolation points}

We apply the EIM procedure \cite{Maday2008c} to the space $\Phi_M$ defined in (\ref{eq3a:7qq}) to obtain interpolation points. First, we find
\begin{equation}
\label{eq23w}
j_1 = \arg \max_{1 \le j \le M} \|\phi_j  \|_{L^\infty(\Omega)},  
\end{equation}
and set
\begin{equation}
{\bm y}_1 = \arg \sup_{\bm x \in \Omega} |\phi_{j_1}(\bm x)|, \qquad \psi_1(\bm x) = \phi_{j_1}(\bm x)/\phi_{j_1}({\bm y}_1).      
\end{equation}
For $m = 2, \ldots, M$, we solve the linear systems
\begin{equation}
\sum_{i=1}^{m-1}  \psi_i({\bm y}_j)   \sigma_{li} = \phi_l({\bm y}_j), \quad 1 \le j \le m-1 , 1 \le l \le M,
\end{equation}
we then find
\begin{equation}
\label{argmax}
j_{m} = \arg \max_{1 \le l \le M} \| \phi_l(\bm x) - \sum_{i=1}^{m-1}  \sigma_{li} \psi_i(\bm x)\|_{L^\infty(\Omega)}, 
\end{equation}
and set
\begin{equation}
{\bm y}_m = \arg \sup_{\bm x \in \Omega} |r_M(\bm x)|, \qquad \psi_m(\bm x) = r_m(\bm x)/r_m({\bm y}_m)    ,   
\end{equation}
where the residual function $r_m(\bm x)$ is given by
\begin{equation}
\label{eq28w}
r_m(\bm x) = \phi_{j_m}(\bm x) - \sum_{i=1}^{m-1}  \sigma_{j_m i} \psi_i(\bm x) . 
\end{equation}
% finally, we define
% \begin{equation}
% \label{eq28wq}
% \Psi_M^{ g} = \mbox{span} \{\psi_1^{ g}, \ldots, \psi_M^{ g}\}, \qquad 
% T_M^{ g} = \{{\bm x}_1^{ g}, \ldots, {\bm x}_1^{ g}\} .
% \end{equation}
In practice, the supremum $\sup_{\bm x \in \Omega} |r_M(\bm x)|$ is computed on the set of quadrature points on all elements in the mesh. In other words, the interpolation points $\{\bm y_m\}_{m=1}^M$ are selected from the quadrature points.

For any given parameter sample set $S_N$, the high-order EIM method constructs $M$ interpolation points $\{\bm y_m\}_{m=1}^M$ and $M$ basis functions $\{\psi_m\}_{m=1}^M$ by leveraging partial derivatives. In contrast, the original EIM only generates $N$ interpolation points and basis functions. If we wish to generate more than  $N$ interpolation points and basis functions with the original EIM, we must expand the parameter sample set and solve the underlying FOM for additional solutions. However, doing so increases the computational cost in both the offline and online stages.   By allowing 
$M > N$, the high-order EIM enhances ROM stability and accuracy without additional parameter samples to provide cost-effective construction of ROMs. The computational cost of the online stage scales linearly with $M$, making the method both efficient and accurate.

\subsection{Stability of the high-order empirical interpolation}

%The high-order EIM has all the desirable properties of the  EIM. The first-order EIM is well-defined in the sense that the basis functions are linearly independent and the matrix $\bm B_M^{ g}$ with entries $B_{M, ij}^{ g} = \psi_j^{ g}({\bm x}_i^{ g}), 1 \le i,j, \le M,$ is invertible. 
Let $\Psi_{M} = {\rm span} \: \{ \psi_1, \dots,\psi_{M} \}$ and $T_M = \{\bm y_1, \ldots, \bm y_M\}$. For any given function $v(\bm x)$, we define its interpolant as follows
\begin{equation}
\mathcal{I}(\Psi_M, T_M)[v(\bm x)] = \sum_{m=1}^M a_{m}\psi_m(\bm x)  ,
\end{equation}
where the coefficients $a_{m}, 1 \le m \le M,$ are given by
\begin{equation}
\label{eqcoeff1q}
\sum_{m=1}^M \psi_m(\bm y_k) a_{m} = v(\bm y_k), \quad k = 1,\ldots, M.
\end{equation}
This system of equations can be written in the matrix form as
\begin{equation}
\label{eqcoeff2q}
\bm a_M = \left( \bm B_M \right)^{-1}\bm v_M .
\end{equation}
Here $\bm B_M$ has entries $B_{M,mk} = \psi_m(\bm y_k)$ and $\bm v_M$ has entries $v_{M,m} =  v(\bm y_m)$ for $1 \le m,k \le M$.

%In what follows, we denote the interpolant of any  function $v$ by $\mathcal{I}_M^{ g}[v]$ to simplify the notation.  We obtain an intermediate result:

\begin{lemma}
Assume that $\Psi_{M}$ is of dimension $M$ and that $\bm B_{M}$ is invertible, then we have $\mathcal{I}(\Psi_M, T_M)[v] = v$ for any $v \in \Psi_{M}$. In other words, the interpolation is exact for all v in $\Psi_{M}$.
\end{lemma}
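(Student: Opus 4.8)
The plan is to show that if $v \in \Psi_M$, then the interpolation operator reproduces $v$ exactly by exploiting the fact that the interpolation conditions uniquely determine the coefficient vector. First I would write $v$ in terms of the basis of $\Psi_M$: since $v \in \Psi_M = \mathrm{span}\{\psi_1,\ldots,\psi_M\}$, there exist unique coefficients $c_1,\ldots,c_M$ (unique because $\dim \Psi_M = M$ means the $\psi_m$ are linearly independent) such that $v(\bm x) = \sum_{m=1}^M c_m \psi_m(\bm x)$. The interpolant $\mathcal{I}(\Psi_M, T_M)[v]$ is by definition $\sum_{m=1}^M a_m \psi_m(\bm x)$, where $\bm a_M$ solves $\bm B_M \bm a_M = \bm v_M$ with $v_{M,k} = v(\bm y_k)$.

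The key step is to evaluate the right-hand side data vector $\bm v_M$ in terms of $\bm c$. Evaluating $v$ at the interpolation point $\bm y_k$ gives $v(\bm y_k) = \sum_{m=1}^M c_m \psi_m(\bm y_k) = \sum_{m=1}^M B_{M,mk}\, c_m$, so in matrix form $\bm v_M = \bm B_M^{\mathsf T} \bm c$ (one must be careful with the index convention: the paper defines $B_{M,mk} = \psi_m(\bm y_k)$, so the sum $\sum_m \psi_m(\bm y_k) c_m$ is the $k$th entry of $\bm B_M^{\mathsf T}\bm c$, which is consistent with the interpolation system (\ref{eqcoeff1q}) that reads $\sum_m \psi_m(\bm y_k) a_m = v(\bm y_k)$). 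Then, since $\bm B_M$ is invertible by hypothesis, the linear system $\sum_m \psi_m(\bm y_k) a_m = \sum_m \psi_m(\bm y_k) c_m$ for all $k$ has the unique solution $\bm a = \bm c$. Substituting back, $\mathcal{I}(\Psi_M, T_M)[v] = \sum_{m=1}^M c_m \psi_m = v$, which is the claim.

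The main obstacle, such as it is, lies entirely in getting the index bookkeeping consistent between the coefficient expansion and the interpolation system; the argument is otherwise a direct consequence of the uniqueness of solutions to an invertible linear system. Concretely, the cleanest route is to argue at the level of the interpolation conditions rather than manipulating the matrix form: the vector of coefficients $\bm c$ of the expansion of $v$ already satisfies the interpolation equations (\ref{eqcoeff1q}) identically, because plugging $a_m = c_m$ into $\sum_m \psi_m(\bm y_k) a_m$ recovers exactly $v(\bm y_k)$ by definition of $\bm c$. Invertibility of $\bm B_M$ then guarantees this is the \emph{only} solution, so the computed $\bm a$ must equal $\bm c$, forcing $\mathcal{I}(\Psi_M, T_M)[v] = v$. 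I would present the proof in this order so that the role of invertibility (uniqueness of the solution) and the role of $\dim \Psi_M = M$ (well-definedness and uniqueness of the expansion coefficients) are cleanly separated.
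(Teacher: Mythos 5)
Your proof is correct and follows essentially the same route as the paper: expand $v$ in the basis $\{\psi_m\}$, observe that the expansion coefficients already satisfy the interpolation conditions at the points $\bm y_k$, and invoke invertibility of $\bm B_M$ to conclude the interpolation coefficients coincide with the expansion coefficients. Your added care about the index/transpose convention and the separation of the roles of $\dim\Psi_M=M$ versus invertibility is a fair clarification but does not change the argument.
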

\begin{proof}
For $v \in \Psi_{M}$, which can be expressed as $v(\bm x) = \sum_{j=1}^{M} c_{M,j} \psi_j(\bm x)$, we consider $\bm x = {\bm y}_m$ to obtain $v({\bm y}_m) = \sum_{j=1}^{M} c_{M,j} \psi_j({\bm y}_m), 1 \leq m \leq M$. It thus follows from the invertibility of $\bm B_{M}$ that $\bm c_{M} = \bm a_{M}$. Hence, we have $\mathcal{I}(\Psi_M, T_M)[v] = v$. 
\end{proof}

We can then prove the following theorem:
\begin{thm}
\label{thm1}
The space $\Psi_M$ is of dimension $M$. In addition, the matrix $\bm B_M$ is lower triangular with unity diagonal.   
\end{thm}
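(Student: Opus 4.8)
The plan is to prove the two assertions in the reverse of the order stated: first establish that $\bm B_M$ is lower triangular with unit diagonal, and then deduce $\dim \Psi_M = M$ as a consequence. Throughout I read the interpolation-point index as the row and the basis-function index as the column, so that the entry of $\bm B_M$ in row $k$, column $m$ is $\psi_m(\bm y_k)$, consistent with the defining linear system $\bm B_M \bm a_M = \bm v_M$ in (\ref{eqcoeff2q}) together with $v_{M,k} = v(\bm y_k)$.

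First I would record the two elementary facts that together give the triangular structure. The normalization $\psi_m = r_m/r_m(\bm y_m)$ yields immediately $\psi_m(\bm y_m) = 1$, which is the unit diagonal. For the vanishing above the diagonal, fix $k < m$ and evaluate the residual (\ref{eq28w}), $r_m(\bm y_k) = \phi_{j_m}(\bm y_k) - \sum_{i=1}^{m-1}\sigma_{j_m i}\psi_i(\bm y_k)$. The coefficients $\sigma_{j_m i}$ are defined precisely by the interpolation conditions $\sum_{i=1}^{m-1}\psi_i(\bm y_k)\sigma_{j_m i} = \phi_{j_m}(\bm y_k)$ for $1 \le k \le m-1$ (take $l = j_m$ in the $\sigma$-system preceding (\ref{argmax})), so the two terms cancel and $r_m(\bm y_k) = 0$, hence $\psi_m(\bm y_k) = 0$ for every $k < m$. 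This is exactly the statement that the entry of $\bm B_M$ vanishes whenever the column index exceeds the row index, i.e., $\bm B_M$ is lower triangular with unit diagonal.

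Before this is rigorous I must verify that the construction is \emph{well defined}, i.e., that $r_m(\bm y_m) = \|r_m\|_{L^\infty(\Omega)} \ne 0$ at every step so that the normalization is legitimate and the $\sigma$-system is solvable. I would argue this by induction on $m$: solvability of the $\sigma$-system at step $m$ follows because its coefficient matrix is the $(m-1)\times(m-1)$ leading block of $\bm B_M$, which by the inductive hypothesis is lower triangular with unit diagonal and hence invertible; and nonvanishing of $r_m$ follows from linear independence of the POD basis $\{\phi_1,\dots,\phi_M\}$ spanning $\Phi_M$ (its dimension is $M$, since the retained eigenvalues satisfy $\lambda_1 \ge \cdots \ge \lambda_M > 0$). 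Indeed, if every residual $\phi_l - \sum_{i=1}^{m-1}\sigma_{li}\psi_i$ vanished identically, then $\Phi_M \subseteq \mathrm{span}\{\psi_1,\dots,\psi_{m-1}\}$ would force $\dim \Phi_M \le m-1 < M$, a contradiction; the argmax in (\ref{argmax}) therefore selects an index $j_m$ with $r_m \not\equiv 0$, so $r_m(\bm y_m) \ne 0$.

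Finally I would deduce the dimension claim. Since $\bm B_M$ is lower triangular with unit diagonal, $\det \bm B_M = 1 \ne 0$, so $\bm B_M$ is invertible. Consequently the $\psi_m$ are linearly independent: any relation $\sum_{m=1}^M c_m \psi_m = 0$, evaluated at the points $\bm y_1,\dots,\bm y_M$, reads $\bm B_M \bm c = \bm 0$ and hence $\bm c = \bm 0$ (equivalently, forward substitution using $\psi_m(\bm y_k)=0$ for $k<m$ and $\psi_m(\bm y_m)=1$ gives $c_1 = c_2 = \cdots = c_M = 0$). Therefore $\Psi_M = \mathrm{span}\{\psi_1,\dots,\psi_M\}$ has dimension $M$. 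I expect the only genuine subtlety to be the well-definedness step --- ensuring no residual collapses prematurely --- which is where the linear independence of the POD basis (the standing assumption $\lambda_M > 0$) is essential; the triangular structure itself is then a routine consequence of the greedy residual construction.
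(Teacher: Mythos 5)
Your proof is correct, but it is organized differently from the paper's and uses a different key step for the dimension claim. The paper argues by induction on $M$ and proves the two assertions in the stated order: first $\dim \Psi_M = M$, by contradiction --- if $\dim \Psi_M \neq M$ then $\phi_{j_M}$ already lies in $\Psi_{M-1}$, so by Lemma 1 (exactness of the interpolation on $\Psi_{M-1}$) the residual $r_M$ vanishes identically, contradicting $\|r_M\|_{L^\infty(\Omega)} > 0$; and only then the triangular structure of $\bm B_M$, by the same entrywise computation you give ($B_{M,ij} = r_j(\bm y_i)/r_j(\bm y_j)$, which is $0$ for $i<j$ and $1$ for $i=j$). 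You reverse the order: you establish the triangular structure first and then obtain $\dim \Psi_M = M$ as a pure linear-algebra corollary --- invertibility of $\bm B_M$ forces linear independence of the $\psi_m$ --- bypassing Lemma 1 entirely. Beyond this reorganization, your proof is more complete on one point the paper glosses over: well-definedness of the greedy construction. The paper simply asserts $\|r_M\|_{L^\infty(\Omega)} > 0$ ``from our arg max construction,'' whereas you justify it: the $\sigma$-systems are solvable because their coefficient matrices are leading blocks of $\bm B_M$ (inductive hypothesis), and the residuals cannot all vanish at step $m$ unless $\Phi_M \subseteq \mathrm{span}\{\psi_1,\dots,\psi_{m-1}\}$, contradicting the linear independence of the POD modes. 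This makes explicit the hypothesis $\dim \Phi_M = M$ (equivalently $\lambda_M > 0$) on which the theorem silently relies. What each approach buys: the paper's route exploits the just-proven Lemma 1 so that the dimension argument is conceptually immediate, while yours is more elementary and self-contained, and it pinpoints exactly where the nondegeneracy of the POD basis enters --- which is where the paper's argument has a gap, since its asserted positivity of $\|r_M\|_{L^\infty(\Omega)}$ is precisely what your POD-independence argument supplies.
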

\begin{proof}
We shall proceed by induction. Clearly, $\Psi_1 = {\rm span} \: \{ \psi_1  \}$ is of dimension 1 and the matrix $\bm B_1 = 1$ is invertible.  Next we assume that $\Psi_{M-1} = {\rm span} \: \{ \psi_1 , \dots, \psi_{M} \}$ is of dimension $M-1$ and the matrix $\bm B_{M-1}$ is invertible. We must then prove ({\it i}) $\Psi_M  = \mbox{span}\{\psi_1, \ldots, \psi_M \}$ is of dimension $M$ and ({\it ii}) the matrix $\bm B_M$ is invertible. To prove ({\it i}), we note from our ``arg max'' construction (\ref{argmax}) that $\|r_M(\bm x)\|_{L^\infty(\Omega)} > 0$. Hence, if $\dim(\Psi_M) \neq M$, we have $\phi_{j_M} \in \Psi_{M}$ and thus $\|r_M(\bm x)\|_{L^\infty(\Omega)} = 0$ by Lemma 1; however, the latter contradicts $\|r_M(\bm x)\|_{L^\infty(\Omega)} > 0$.  To prove ({\em ii\/}), we just note from the
  construction procedure (\ref{eq23w})-(\ref{eq28w}) that $B_{{M},{i \: j}} = r_j({\bm y}_i ) /r_j ({\bm y}_j) = 0$
  for $i < j$; that $B_{{M},{i \: j}} = r_j({\bm y}_i ) /r_j ({\bm y}_j ) = 1$ for $i =
  j$; and that $\left|B_{{M},{i \: j}} \right| = \left|r_j({\bm y}_i ) /r_j ({\bm y}_j ) \right| \leq 1$ for $i > j$ since ${\bm y}_j  = \arg  \:
  \max_{x \in \Omega} |r_j (\bm x)|, 1 \leq j \leq M$. Hence, $\bm B_{M}$ is
  lower triangular with unity diagonal. Hence, $B^M$ is invertible. 
\end{proof}

This theorem implies that the high-order EIM yields unique interpolation points and linearly independent basis functions.

\subsection{Convergence of the high-order empirical interpolation}

The convergence analysis of the interpolation procedure involves the Lebesgue constant as follows
\begin{thm}
The interpolation error $\varepsilon_M(\bm \mu) \equiv \|g(u(\bm x, \bm \mu), \bm \mu) -  g_M(\bm x, \bm \mu) \|_{L^\infty(\Omega)}$ is bounded by
\begin{equation}
\label{EIMbound}
\varepsilon_M(\bm \mu) \le (1 + \Lambda_M)  \inf_{v \in \Psi_M}  \|g(u(\bm x, \bm \mu), \bm \mu) - v \|_{L^\infty(\Omega)}, 
\end{equation}
where $\Lambda_M$ is the Lebesgue constant 
\begin{equation}
\Lambda_M = \sup_{\bm x \in \Omega} \sum_{j=1}^M \left|\sum_{m=1}^M \psi_m(\bm x) [\bm B_M]^{-1}_{mk} \right| .    
\end{equation}
And the Lebesgue constant satisfies $\Lambda_M \le 2^M-1$. 
\end{thm}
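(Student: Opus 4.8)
The plan is to prove the two assertions separately: first the Lebesgue-type a priori bound (\ref{EIMbound}), which follows directly from the exactness property in Lemma 1, and then the worst-case estimate $\Lambda_M \le 2^M - 1$, which exploits the triangular structure of $\bm B_M$ established in Theorem \ref{thm1}. Throughout I would abbreviate $g \equiv g(u(\bm x, \bm \mu), \bm \mu)$ and note that $g_M = \mathcal{I}(\Psi_M, T_M)[g]$, and that the operator $\mathcal{I}(\Psi_M, T_M)[\cdot]$ is linear in its argument, since the coefficients in (\ref{eqcoeff2q}) depend linearly on the point values $\bm v_M$. It is convenient to write the interpolant of any function $w$ as $\mathcal{I}(\Psi_M, T_M)[w](\bm x) = \sum_{k=1}^M h_k(\bm x)\, w(\bm y_k)$, where $h_k(\bm x) = \sum_{m=1}^M \psi_m(\bm x)\,[\bm B_M]^{-1}_{mk}$ are the cardinal functions satisfying $h_k(\bm y_i) = \delta_{ik}$.

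For the first part, I would fix any $v \in \Psi_M$ and use linearity together with the exactness property $\mathcal{I}(\Psi_M, T_M)[v] = v$ from Lemma 1 to write $g - g_M = (g - v) - \mathcal{I}(\Psi_M, T_M)[g - v]$. From the cardinal-function representation, $|\mathcal{I}(\Psi_M, T_M)[w](\bm x)| \le \left(\sum_{k=1}^M |h_k(\bm x)|\right)\|w\|_{L^\infty(\Omega)}$, so taking the supremum over $\bm x \in \Omega$ gives $\|\mathcal{I}(\Psi_M, T_M)[w]\|_{L^\infty(\Omega)} \le \Lambda_M \|w\|_{L^\infty(\Omega)}$ by the definition of the Lebesgue constant. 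Applying this with $w = g - v$ and the triangle inequality yields $\varepsilon_M(\bm \mu) \le (1 + \Lambda_M)\|g - v\|_{L^\infty(\Omega)}$, and since $v \in \Psi_M$ is arbitrary, taking the infimum establishes (\ref{EIMbound}).

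For the second part, I would bound $\sum_{k=1}^M |h_k(\bm x)|$ uniformly in $\bm x$. Theorem \ref{thm1} supplies the needed structure: $\bm B_M$ is lower triangular with unit diagonal and subdiagonal entries $|\psi_j(\bm y_i)| = |B_{M,ij}| \le 1$ for $i > j$; moreover the construction $\psi_m = r_m/r_m(\bm y_m)$ with $\bm y_m = \arg\sup_{\bm x}|r_m(\bm x)|$ forces $\|\psi_m\|_{L^\infty(\Omega)} \le 1$. For any prescribed point values with $|w(\bm y_k)| \le 1$, the coefficient vector $\bm a_M = (\bm B_M)^{-1}\bm w_M$ is obtained by forward substitution, $a_i = w(\bm y_i) - \sum_{j<i}\psi_j(\bm y_i)\,a_j$, and I would prove by induction that $|a_i| \le 2^{i-1}$: the base case $|a_1| = |w(\bm y_1)| \le 1$ is immediate, and if $|a_j| \le 2^{j-1}$ for $j<i$, then $|a_i| \le 1 + \sum_{j=1}^{i-1} 2^{j-1} = 2^{i-1}$ using $|\psi_j(\bm y_i)| \le 1$. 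Consequently $|\mathcal{I}(\Psi_M, T_M)[w](\bm x)| \le \sum_{m=1}^M |a_m|\,|\psi_m(\bm x)| \le \sum_{m=1}^M 2^{m-1} = 2^M - 1$ for every $\bm x$.

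To convert this into the claimed bound on $\Lambda_M$, I would observe that the estimate $|a_i| \le 2^{i-1}$ depends only on the interpolation-point values $\{w(\bm y_k)\}$, which may be assigned freely subject to $|w(\bm y_k)| \le 1$. Fixing $\bm x \in \Omega$ and choosing $w(\bm y_k) = \mathrm{sgn}(h_k(\bm x))$, so that $\mathcal{I}(\Psi_M, T_M)[w](\bm x) = \sum_{k=1}^M |h_k(\bm x)|$, the bound just proved gives $\sum_{k=1}^M |h_k(\bm x)| \le 2^M - 1$; taking the supremum over $\bm x$ yields $\Lambda_M \le 2^M - 1$. I expect the main subtlety to lie precisely in this last step: the forward-substitution estimate controls $\mathcal{I}(\Psi_M, T_M)$ acting on genuine functions, whereas $\Lambda_M$ is defined through the cardinal functions, and the sign-selection argument is what legitimately identifies $\Lambda_M$ with the $L^\infty$ operator norm. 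The resulting geometric growth in $M$ is pessimistic in practice, but it is the sharp worst-case guarantee obtainable from the triangular structure alone.
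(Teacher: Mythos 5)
Your proof is correct, but it is worth noting that the paper itself supplies no argument for this theorem: it states only ``This result has been proven in [Barrault et al., 2004]'' and moves on, so the comparison here is between your proof and the cited literature rather than an internal one. What you have written is a self-contained reconstruction of the standard argument, correctly wired to the paper's own ingredients: the Lebesgue inequality follows from linearity of $\mathcal{I}(\Psi_M,T_M)$ and the exactness property of Lemma 1, exactly as you argue, and the bound $\Lambda_M \le 2^M-1$ uses precisely the structure supplied by Theorem 1 ($\bm B_M$ lower triangular with unit diagonal and off-diagonal entries of modulus at most one) together with $\|\psi_m\|_{L^\infty(\Omega)} \le 1$, which indeed follows from the normalization $\psi_m = r_m/r_m(\bm y_m)$ at the maximizing point. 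Your forward-substitution induction $|a_i| \le 2^{i-1}$ is a transposed variant of the classical backward induction on the cardinal functions ($\|h_k\|_{L^\infty(\Omega)} \le 2^{M-k}$) used in the original EIM reference; both yield $\sum_{m=1}^M 2^{m-1} = 2^M-1$. The sign-selection step identifying $\Lambda_M$ with the $\ell^\infty\!\to L^\infty$ operator norm is legitimate for exactly the reason you give: the interpolant depends only on the values at $\{\bm y_k\}$, so those values may be prescribed as $\pm 1$ without needing to exhibit a particular function, and this is the right way to close what would otherwise be a gap. One cosmetic remark: the displayed formula for $\Lambda_M$ in the theorem statement has a dangling index ($j$ on the outer sum, $k$ inside); your reading $h_k(\bm x) = \sum_{m=1}^M \psi_m(\bm x)\,[\bm B_M]^{-1}_{mk}$ with the outer sum over $k$ is the intended one.
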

\noindent

This result has been proven in \cite{Barrault2004}. The last term in the right hand side of the above inequality is known as the best approximation error. Although the upper bound on the Lebesgue constant is very pessimistic, it can be realized in some extreme cases \cite{Maday2008c}.

For the parametric manifold $\mathcal{G} := \{g(u(\bm x, \bm \mu)) : \bm \mu \in \mathcal{D}\}$, the best approximation of an element $q \in \mathcal{G}$ in some finite dimensional space $\mathcal{X}_n$ of dimension $n$ is given by the orthogonal projection onto $\mathcal{X}_n$, namely, $q^* = \arg \inf_{p \in \mathcal{X}_n} \|q - p \|_{L^\infty(\Omega)}$. In many cases, the evaluation of the best approximation may be costly and the knowledge of $q$ over the entire domain $\Omega$ is required. Thus,  interpolation is referred to as an inexpensive surrogate to the best approximation. The  $n$-width for an interpolation process is defined by
\begin{equation}
\widehat{d}_n(\mathcal{G}) = \inf_{\mathcal{X}_n} \sup_{q \in \mathcal{G}} \|q - \mathcal{I}(\mathcal{X}_n, \mathcal{T}_n) [q] \|_{L^\infty(\Omega)} ,     
\end{equation}
where $\mathcal{I}(\mathcal{X}_n, \mathcal{T}_n)[q]$ denotes an interpolant of $q$ in the linear subspace $\mathcal{X}_n$ using $n$ interpolation points in $\mathcal{T}_n$. The interpolation $n$-width  measures the extent to which $\mathcal{G}$ may be interpolated by the interpolation procedure $\mathcal{I}(\mathcal{X}_n, \mathcal{T}_n)$. The interpolation $n$-width $\widehat{d}_n(\mathcal{G})$ is an upper bound of the Kolmogorov $n$-width 
\begin{equation}
d_n(\mathcal{G}) = \inf_{\mathcal{X}_n} \sup_{q \in \mathcal{G}} \inf_{p \in \mathcal{X}_n} \|q - p \|_{L^\infty(\Omega)} .     
\end{equation}
If $\widehat{d}_n(\mathcal{G})$ converges to zero as $n$ goes to infinity as fast as $d_n(\mathcal{G})$, then the interpolation procedure $\mathcal{I}_n(\mathcal{X}_n, \mathcal{T}_n)$ is stable and accurate. 

%Since $\Psi_M^{ g}$ converges rapidly to $\Phi_K^{g}$ as $M$ tends to $K$, the first-order EIM can yield good approximation spaces for the parametric manifold $\mathcal{G}$.

The interpolation $n$-width raises two important questions: Is there a constructive optimal selection for the interpolation points? Is there a constructive optimal construction of the approximation subspaces? The high-order EIM provides a positive answer to the first question by generating a unique set of interpolation points that yield a stable and unique interpolant. The high-order EIM  also provides a positive answer to the second question by using the partial derivatives to construct good approximation spaces.  Indeed, we note from the high-order EIM procedure that
\begin{equation}
\| q - \mathcal{I}(\Psi_m, T_m)[q] \|_{L^\infty(\Omega)} \le \| \phi_{j_{m+1}} - \mathcal{I}(\Psi_m, T_m)[\phi_{j_{m+1}}]\|_{L^\infty(\Omega)} , \quad \forall q \in  \Phi_M .
\end{equation}
This last quantity is one of the outputs of the first-order EIP and plays the role of {\em a priori} error estimate. The convergence of $\| \phi_{j_{m+1}} - \mathcal{I}(\Psi_m, T_m)[\phi_{j_{m+1}}]\|_{L^\infty(\Omega)}$ as $m$ increases can give a sense of the convergence of the interpolation error.

%The largest possible distance between $\mathcal{G}$ and $\mathcal{X}_n$ is defined as 
% \begin{equation}
% d (\mathcal{G}, \mathcal{X}_n) := \sup_{q \in \mathcal{G}} \inf_{\xi \in \mathcal{X}_n} \|q - \xi \|_{L^\infty(\Omega)} .   
% \end{equation} 
% Since $W_N^g \subset \Phi_K^{ g}$, we have $d (\mathcal{G}, \Phi_K^{ g}) \le d (\mathcal{G}, W_N^g)$. Furthermore, 

\subsection{Error estimate of the high-order empirical interpolation}

The convergence analysis of the high-order empirical interpolation provides an estimate for the number of interpolation points needed to achieve a specific accuracy in the offline stage. However, it does not provide an estimate of the interpolation error of the interpolant for any given parameter $\bm \mu$ in the online stage. The following error estimate has been obtained in \cite{Barrault2004}. 
\begin{prop}
If $g(u(\bm x, \bm \mu)) \in \Psi_{M+1}$, then the interpolation error $\varepsilon_M(\bm \mu) \equiv \|g(u(\bm x, \bm \mu),\bm \mu) -  g_M(\bm x, \bm \mu) \|_{L^\infty(\Omega)}$ is bounded by
\begin{equation}
\label{EIMbound2}
\varepsilon_M(\bm \mu) \le \hat{\varepsilon}_M (\bm \mu) \equiv  | g (u({\bm y}_{M+1}, \bm \mu), \bm \mu) - g_M 
({\bm y}_{M+1}, \bm \mu )| .
\end{equation}
\end{prop}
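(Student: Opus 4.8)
The plan is to exploit the hierarchical structure of the basis established in Theorem~\ref{thm1}: the matrix $\bm B_M$ is lower triangular with unit diagonal, so that $\psi_j(\bm y_i) = 0$ for $i < j$ while $\psi_j(\bm y_j) = 1$. Under the hypothesis $g(u(\cdot,\bm \mu),\bm \mu) \in \Psi_{M+1}$, I expect the interpolation error to collapse onto a single term proportional to $\psi_{M+1}$, whose sup-norm equals one by construction; this makes the estimate an exact identity rather than merely a bound.

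First I would expand the hypothesis. Since $g(u(\bm x,\bm \mu),\bm \mu) \in \Psi_{M+1}$, write $g(u(\bm x,\bm \mu),\bm \mu) = \sum_{m=1}^{M+1} c_m(\bm \mu)\,\psi_m(\bm x)$ and split off the last mode, $g = \left(\sum_{m=1}^{M} c_m\psi_m\right) + c_{M+1}\psi_{M+1}$, where the bracketed part lies in $\Psi_M$. By Lemma~1 the interpolant reproduces every element of $\Psi_M$ exactly, and the operator $\mathcal{I}(\Psi_M,T_M)$ defined through~(\ref{eqcoeff1q})--(\ref{eqcoeff2q}) is linear. Hence $g_M = \mathcal{I}(\Psi_M,T_M)[g] = \sum_{m=1}^{M} c_m\psi_m + c_{M+1}\,\mathcal{I}(\Psi_M,T_M)[\psi_{M+1}]$, so that $g - g_M = c_{M+1}\bigl(\psi_{M+1} - \mathcal{I}(\Psi_M,T_M)[\psi_{M+1}]\bigr)$.

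The key step is to show $\mathcal{I}(\Psi_M,T_M)[\psi_{M+1}] = 0$. By the triangular structure from Theorem~\ref{thm1}, $\psi_{M+1}$ vanishes at all earlier interpolation points $\bm y_1,\dots,\bm y_M$. Since the zero function lies in $\Psi_M$ and matches these zero nodal values, invertibility of $\bm B_M$ forces the interpolant to vanish identically. This yields the exact error identity $g(u(\bm x,\bm \mu),\bm \mu) - g_M(\bm x,\bm \mu) = c_{M+1}(\bm \mu)\,\psi_{M+1}(\bm x)$.

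It then remains to identify both sides of the inequality from this identity. Evaluating at $\bm y_{M+1}$ and using $\psi_{M+1}(\bm y_{M+1}) = 1$ gives $\hat\varepsilon_M(\bm \mu) = |g(u(\bm y_{M+1},\bm \mu),\bm \mu) - g_M(\bm y_{M+1},\bm \mu)| = |c_{M+1}(\bm \mu)|$. Taking the $L^\infty(\Omega)$ norm of the identity gives $\varepsilon_M(\bm \mu) = |c_{M+1}(\bm \mu)|\,\|\psi_{M+1}\|_{L^\infty(\Omega)}$; because $\bm y_{M+1}$ is chosen as the argmax of $|r_{M+1}|$ and $\psi_{M+1}$ is the residual $r_{M+1}$ normalized by its value at $\bm y_{M+1}$, one has $\|\psi_{M+1}\|_{L^\infty(\Omega)} = 1$. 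Therefore $\varepsilon_M(\bm \mu) = \hat\varepsilon_M(\bm \mu)$, which in particular gives the stated bound (indeed with equality under the hypothesis). The only subtle point is recognizing that the hierarchical construction collapses the error onto the single normalized mode $\psi_{M+1}$; once $\mathcal{I}(\Psi_M,T_M)[\psi_{M+1}] = 0$ is established, the rest is linearity and the normalization built into the EIM procedure.
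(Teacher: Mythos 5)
Your proof is correct, and it is worth noting that the paper itself does not prove this Proposition directly (it cites it from Barrault et al.); the in-paper argument to compare against is the proof of Theorem~\ref{thm3}, which generalizes the statement to $g \in \Psi_{M+P}$ and recovers the Proposition at $P=1$. The two routes rest on the same structural facts --- $\psi_{M+1}$ vanishes at $\bm y_1,\dots,\bm y_M$, equals $1$ at $\bm y_{M+1}$, and has unit $L^\infty$ norm --- but they are organized differently. The paper expands the \emph{error} $g - g_M$ in the enlarged basis $\Psi_{M+P}$, evaluates at all $M+P$ interpolation points, and uses the lower-triangular-with-unit-diagonal nodal matrix to conclude that the first $M$ expansion coefficients vanish, finishing with the triangle inequality; Lemma~1 is never invoked. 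You instead decompose $g$ itself, use linearity of the interpolation operator together with Lemma~1 (exactness on $\Psi_M$), and reduce everything to the single identity $\mathcal{I}(\Psi_M,T_M)[\psi_{M+1}] = 0$. What your route buys is a sharper conclusion: you obtain $\varepsilon_M(\bm \mu) = \hat{\varepsilon}_M(\bm \mu)$ as an exact identity under the hypothesis, whereas the paper reaches the same equality only by combining the Proposition with the separate remark that $\hat{\varepsilon}_M$ is always a lower bound. What the paper's route buys is generality: the linear-system formulation extends immediately to $P > 1$, where the residual error is a combination of $P$ modes, the reduced $P\times P$ triangular system must genuinely be solved, and the triangle inequality yields only an upper bound rather than an equality.
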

Of course, in general, $g(u(\bm x, \bm \mu), \bm \mu) \notin \Psi_{M+1}$, and  hence the error estimator $\hat{\varepsilon}_M (\bm \mu)$ is not quite an upper bound. Indeed, $\hat{\varepsilon}_M (\bm \mu)$ must be a lower bound of the interpolation error $\varepsilon_M(\bm \mu)$ due to the fact $| g (u({\bm y}_{M+1}, \bm \mu), \bm \mu) - g_M ({\bm y}_{M+1}, \bm \mu )| 
 \le \|g(u(\bm x, \bm \mu), \bm \mu) -  g_M(\bm x, \bm \mu) \|_{L^\infty(\Omega)}$. We extend the above result to improve the error estimate as follows.

\begin{thm}
\label{thm3}
If $g(u(\bm x, \bm \mu),\bm \mu) \in \Psi_{M+P}$ for $P \in \mathbb{N}_{+}$, then the interpolation error $\varepsilon_M(\bm \mu) \equiv \|g(u(\bm x, \bm \mu), \bm \mu) -  g_M(\bm x, \bm \mu) \|_{L^\infty(\Omega)}$ is bounded by
\begin{equation}
\label{EIMbound3}
\varepsilon_M(\bm \mu) \le \hat{\varepsilon}_{M,P} (\bm \mu) \equiv  \sum_{j=1}^P | e_j(\bm \mu) | ,
\end{equation}
where $e_j(\bm \mu), 1 \le j \le P,$ solve the following linear system
\begin{equation}
\sum_{j=1}^{P} \psi_{M+j}({\bm y}_{M+i}) e_j(\bm \mu)  =   g (u({\bm y}_{M+i}, \bm \mu), \bm \mu) - g_M 
({\bm y}_{M+i}, \bm \mu ), \quad 1 \le i \le P .
\end{equation}
\end{thm}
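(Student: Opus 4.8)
The plan is to reduce the error to the ``tail'' of the expansion in $\Psi_{M+P}$ by exploiting the lower-triangular, unit-diagonal structure of $\bm B_{M+P}$ established in Theorem~\ref{thm1}. First I would note that $g_M \in \Psi_M \subset \Psi_{M+P}$ while $g(u(\bm x, \bm \mu), \bm \mu) \in \Psi_{M+P}$ by hypothesis, so the error lies in $\Psi_{M+P}$ and admits an expansion $g(u(\bm x,\bm \mu),\bm \mu) - g_M(\bm x,\bm \mu) = \sum_{m=1}^{M+P} d_m(\bm \mu)\,\psi_m(\bm x)$ in the basis of $\Psi_{M+P}$. Because $g_M$ interpolates $g$ at the first $M$ points, the error vanishes there: $(g-g_M)(\bm y_k)=0$ for $1\le k\le M$.

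Next I would evaluate this expansion at $\bm y_k$ for $k=1,\dots,M$. Theorem~\ref{thm1} gives $\psi_m(\bm y_k)=0$ for $k<m$ and $\psi_m(\bm y_m)=1$, so each of these $M$ equations involves only $d_1,\dots,d_k$; the resulting system is lower triangular with unit diagonal, and forward substitution forces $d_1=\dots=d_M=0$. Hence the error collapses to $g-g_M=\sum_{j=1}^{P} d_{M+j}(\bm \mu)\,\psi_{M+j}(\bm x)$, involving only the last $P$ basis functions.

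To match these surviving coefficients with the $e_j(\bm \mu)$ of the statement, I would evaluate the truncated expansion at $\bm y_{M+i}$, $1\le i\le P$, giving $\sum_{j=1}^{P}\psi_{M+j}(\bm y_{M+i})\,d_{M+j}(\bm \mu) = g(u(\bm y_{M+i},\bm \mu),\bm \mu)-g_M(\bm y_{M+i},\bm \mu)$, which is exactly the defining linear system for $e_j(\bm \mu)$. Its $P\times P$ matrix is the trailing diagonal block of $\bm B_{M+P}$ and is again lower triangular with unit diagonal, hence invertible, so uniqueness gives $d_{M+j}(\bm \mu)=e_j(\bm \mu)$. Finally, the triangle inequality applied to $g-g_M=\sum_{j=1}^{P}e_j(\bm \mu)\,\psi_{M+j}$ together with the normalization $\|\psi_m\|_{L^\infty(\Omega)}=1$ (which holds because $\bm y_m=\arg\sup_{\bm x\in\Omega}|r_m(\bm x)|$ and $\psi_m=r_m/r_m(\bm y_m)$) yields $\varepsilon_M(\bm \mu)\le\sum_{j=1}^{P}|e_j(\bm \mu)|\,\|\psi_{M+j}\|_{L^\infty(\Omega)}=\hat\varepsilon_{M,P}(\bm \mu)$.

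I expect the crux to be the forward-substitution step: one must verify that the $M$ vanishing interpolation conditions, combined with the triangular structure, annihilate precisely the first $M$ coefficients, thereby decoupling the error into a $P$-dimensional tail governed by a small triangular system. Everything else is routine bookkeeping, and the case $P=1$ recovers Proposition~1.
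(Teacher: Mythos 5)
Your proposal is correct and follows essentially the same argument as the paper's proof: expand the error $g - g_M$ in the basis of $\Psi_{M+P}$, use the vanishing interpolation conditions together with the lower-triangular unit-diagonal structure of $\bm B_{M+P}$ (Theorem~\ref{thm1}) to annihilate the first $M$ coefficients, identify the surviving tail coefficients with the $e_j(\bm \mu)$, and conclude by the triangle inequality with $\|\psi_{M+j}\|_{L^\infty(\Omega)} = 1$. The only difference is presentational—you spell out the forward-substitution step and the invertibility of the trailing $P \times P$ block slightly more explicitly than the paper does.
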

\begin{proof}
Since by assumption $g(u(\bm x, \bm \mu), \bm \mu) \in \Psi_{M+P}$, we have 
$$g(u(\bm x, \bm \mu), \bm \mu) - g_M(\bm x, \bm \mu)  =
\sum^{M+P}_{m = 1} \kappa_m(\bm \mu) \: \psi_m (\bm x) , $$  
which yields 
$$\sum^{M+P}_{m = 1}   \psi_m  ({\bm y}_i) \: \kappa_m(\bm \mu) = g (u({\bm y}_i, \bm \mu), \bm \mu) -g_M ({\bm y}_i, \bm  \mu),  \quad 1 \leq i \leq M + P .$$
Since $g (u ({\bm y}_i, \bm  \mu), \bm \mu) -g_M ({\bm y}_i, \bm  \mu) = 0$, $1 \leq i \leq M$ and the matrix $\psi_m  ({\bm y}_i)$ is lower triangular with unity diagonal, we have $\kappa_m(\bm \mu) = 0$, $1 \leq m \leq M$. Therefore, the above system reduces to the following  system  
$$\sum^{P}_{j = 1}   \psi_{M+j} ({\bm y}_{M+i}) \: \kappa_{M+j}(\bm \mu) = g (u({\bm y}_{M+i}, \bm \mu), \bm \mu) -g_M ({\bm y}_{M+i}, \bm  \mu),  \quad 1 \leq i \leq P. $$
It follows from Theorem \ref{thm1} that $e_j(\bm \mu) = \kappa_{M+j}(\bm \mu), 1 \le j \le P,$ and thus we obtain
$$g(u(\bm x, \bm \mu), \bm \mu) - g_M(\bm x, \bm \mu)  =
\sum^{P}_{j = 1} e_j(\bm \mu) \: \psi_{M+j} (\bm x) . $$  
The desired result directly follows from taking the ${L^{\infty}(\Omega)}$ norm on both sides, using the triangle inequality, and $\| \psi_{M+j} (\bm x) \|_{L^{\infty}(\Omega)} = 1, 1 \le j \le P$. 
\end{proof}

The operation count of evaluating the error estimator (\ref{EIMbound3}) is only $O(P^2)$. Hence, the error estimator is very inexpensive to evaluate. Error estimation plays a critical role in ensuring the accuracy of the interpolation and guiding the selection of parameter points via greedy sampling. The error estimate  provides a form of certification, allowing us to control and balance the trade-off between computational efficiency and accuracy. Furthermore, parameter points that exhibit the highest error estimates are selected for further FOM evaluations, thus enriching the RB space in the low accuracy regions and ensuring rapid and reliable convergence.

% greedy sampling [refs] is often employed to select parameter samples based on a posteriori error estimates. It aims to accurately capture the solution manifold by ensuring that the largest error estimate over a fine parameter sample is less than a specified tolerance. Greedy sampling iteratively refines the snapshot set by identifying the most informative parameter points and computing the corresponding FOM solutions. However, the greedy sampling  raises an intriguing question: for a given snapshot set, can the quality of the snapshot set be enhanced without requiring additional FOM solutions?

\subsection{Greedy sampling}

Let $\Xi^{\rm train} = \{\bm \mu_j^{\rm train} , 1 \le j \le N^{\rm train}\}$ be a suitably fine training set of $N^{\rm train}$ parameter points.  We assume that we are given an initial parameter set $S_N = \{\bm \mu_1, \ldots, \bm \mu_N\}$, where $N$ is typically small. The greedy sampling approach repeatedly finds the next parameter point as $\bm \mu_{N+1} = \max_{\bm \mu \in \Xi^{\rm train}} \hat{\varepsilon}_{M,P} (\bm \mu)$ and appends $\bm \mu_{N+1}$ to $S_N$ until  $\hat{\varepsilon}_{M,P} (\bm \mu_{N+1})$ is less than a specified tolerance. Here $\hat{\varepsilon}_{M,P} (\bm \mu)$ is the error estimate defined in Theorem \ref{thm3}. This error estimate requires us to evaluate $u(\bm y_i, \bm \mu), 1 \le i \le M+P$, for all $\bm \mu \in \Xi^{\rm train}$.

For the model reduction methods described in Section 2, we actually aim to approximate $g(u_N(\bm x, \bm \mu),  \bm \mu)$ with the following function 
\begin{equation}
\label{eq1wq}
g_M(\bm x, \bm \mu) = \sum_{m=1}^M \beta_{M,m}(\bm \mu) \psi_m(\bm x)   ,
\end{equation}
where the coefficients $\beta_{M,m}(\bm \mu), 1 \le m \le M,$ are found as the solution of the following linear system 
\begin{equation}
\label{eq2wq}
\sum_{m=1}^M  \psi_m({\bm y}_k)   \beta_{M,m}(\bm \mu) = g(u_N({\bm y}_k, \bm \mu), \bm \mu), \quad 1 \le k \le M .
\end{equation}
The error estimate for $\|g(u_N(\bm x, \bm \mu), \bm \mu) -  g_M(\bm x, \bm \mu) \|_{L^\infty(\Omega)}$ is given by
\begin{equation}
\label{EIMbound3w}
\hat{\varepsilon}_{M,P} (\bm \mu) \equiv  \sum_{j=1}^P | e_j(\bm \mu) | ,
\end{equation}
where $e_j(\bm \mu), 1 \le j \le P,$ solve the following linear system
\begin{equation}
\sum_{j=1}^{P} \psi_{M+j}({\bm y}_{M+i}) e_j(\bm \mu)  =   g (u_N({\bm y}_{M+i}, \bm \mu),  \bm \mu) - g_M 
({\bm y}_{M+i}, \bm \mu ), 
\label{eq2wqq}
\end{equation}
for $1 \le i \le P$. Thus, the greedy sampling require us to evaluate $u_N(\bm y_i, \bm \mu) = \sum_{n=1}^N \alpha_n(\bm \mu) \zeta_n(\bm y_i), 1 \le i \le M+P,$ for all $\bm \mu \in \Xi^{\rm train}$, where the RB coefficients $\alpha_n(\bm \mu), 1 \le n \le N,$ are computed by using either the H-GN method or the GN-H method. 

The greedy sampling for the GN-H method is summarized in Algorithm 5. Here we compute the error estimates for both  $g(u_N(\bm \mu), \bm \mu)$ and $\bm f(u_N(\bm \mu), \bm \mu)$, and define $\hat{\varepsilon}_{M,P} (\bm \mu)$ as the maximum value among all error estimates. To accurately calculate these error estimates, both $M$ and $P$ need to be specified, where $M$ represents the number of basis functions and interpolation points used for approximation, and $P$ is the number of additional basis functions and interpolation points used for the error estimation. Typically, we set $P=N$, and choose $M$ as a multiple of $N$ to ensure sufficient interpolation accuracy while maintaining computational efficiency. By selecting 
$M$ as a multiple of $N$, we enhance the resolution of the hyperreduction process without significantly increasing the cost.

With the use of high-order empirical interpolation methods, we can generate up to $N^3$ basis functions to obtain accurate approximations of the nonlinear terms. This expanded basis set ensures that the proposed approach can handle more complex nonlinearities, improve the accuracy of ROMs while controling hyperreduction errors, and construct the parameter sample set $S_N$ via greedy sampling.  

%By incorporating these high-order methods, we can achieve accurate and efficient approximations, even for highly nonlinear systems, enabling real-time computations or repeated queries in many engineering and scientific applications.

%This is possible because the high-order EIM can generate up to $N^3$ basis functions and interpolation points. 

%If there are other nonlinear terms due to  $\bm f(u_N(\bm \mu), \bm x, \bm \mu)$,  we need to compute the error estimates for these terms and define $\hat{\varepsilon}_{M,P} (\bm \mu)$ as the maximum value of all the error estimates. To calculate the error estimates, we must specify both $M$ and $P$. We choose $P = N$ and $M$ as a multiple of $N$. 

\begin{algorithm}
\begin{algorithmic}[1]
\REQUIRE{Training set $\Xi^{\rm train}$ and initial sample $S_N = \{\bm \mu_1, \ldots, \bm \mu_N\}$.}
\ENSURE{Updated sample $S_N$.}
\STATE{Execute the offline stage outlined in Algorithm 3.}
\STATE{Perform the online stage outlined in Algorithm 4 for each $\bm \mu \in \Xi^{\rm train}$.}
\STATE{Solve (\ref{eq2wq}) and (\ref{eq2wqq}) for each $\bm \mu \in \Xi^{\rm train}$.}
\STATE{Evaluate $\hat{\varepsilon}_{M,P} (\bm \mu)$ from (\ref{EIMbound3w}) for each $\bm \mu \in \Xi^{\rm train}$.}
\STATE{Find $\bm \mu_{N+1} = \max_{\bm \mu \in \Xi^{\rm train}} \hat{\varepsilon}_{M,P} (\bm \mu)$.}
\STATE{Set $S_N = S_N \cup \bm \mu_{N+1}$.}
\STATE{If $|\hat{\varepsilon}_{M,P} (\bm \mu_{N+1})| \le \epsilon$, then stop. Else go back to Step 1.
}
%\STATE{Otherwise, go back to Step 1.}
\end{algorithmic}
\caption{Greedy sampling for the GN-H method.}
\end{algorithm}

\subsection{A simple test case}

We present numerical results from a simple test case to compare the performance of three empirical interpolation methods: the original EIM, the first-order EIM (FOEIM), and the second-order EIM (SOEIM). The original EIM relies solely on function values, while FOEIM incorporates first-order partial derivatives to enhance approximation accuracy with minimal additional computational effort. SOEIM further extends this approach by utilizing both first- and second-order partial derivatives. The results show that higher-order derivatives in SOEIM provide significantly improved accuracy without requiring additional parameter points. 

The test case involves the following parametrized functions 
$$u(x, \mu) = \frac{x}{(\mu + 1) \left(1 + \sqrt{\frac{\mu+1}{\exp(62.5)}} \exp \left(\frac{125 x^2}{\mu+1} \right) \right) }, \quad g(u) = 1 - \frac{1}{(1 + u)^2}$$
in a physical domain $\Omega = [0,2]$ and parameter domain $\mathcal{D} = [0, 10]$. Figure \ref{ex1fig1} shows the plots of the nonlinear function $g$ for different values of $\mu$. For the greedy sampling method, we choose an initial sample $S_N = \{0, 5, 10\}$ and use a training sample $\Xi^{\rm train}$ of 100 parameter points distributed uniformly in the parameter space. The greedy algorithm iteratively selects new parameter points based on error estimates and stops when the maximum error estimate $\hat{\varepsilon}_{M,P} (\mu_{N+1})$ falls below the prescribed tolerance of $10^{-3}$. The error estimates of SOEIM with $M=6N$ and $P=N$ are used to guide the greedy sampling. In this case, convergence is achieved at $N=18$. Figure \ref{ex1fig1} demonstrates the decreasing trend of $\hat{\varepsilon}_{M,P} (\mu_{N+1})$ as $N$ increases, illustrating the effectiveness of the greedy sampling. The parameter points selected by the greedy sampling algorithm are listed in Table \ref{ex1tab1}. These points are notably clustered toward the left side of the parameter space, indicating that this region requires a denser sampling to accurately capture the parametric manifold. 

\begin{figure}[htbp]
	\centering
	\begin{subfigure}[b]{0.49\textwidth}
		\centering		\includegraphics[width=\textwidth]{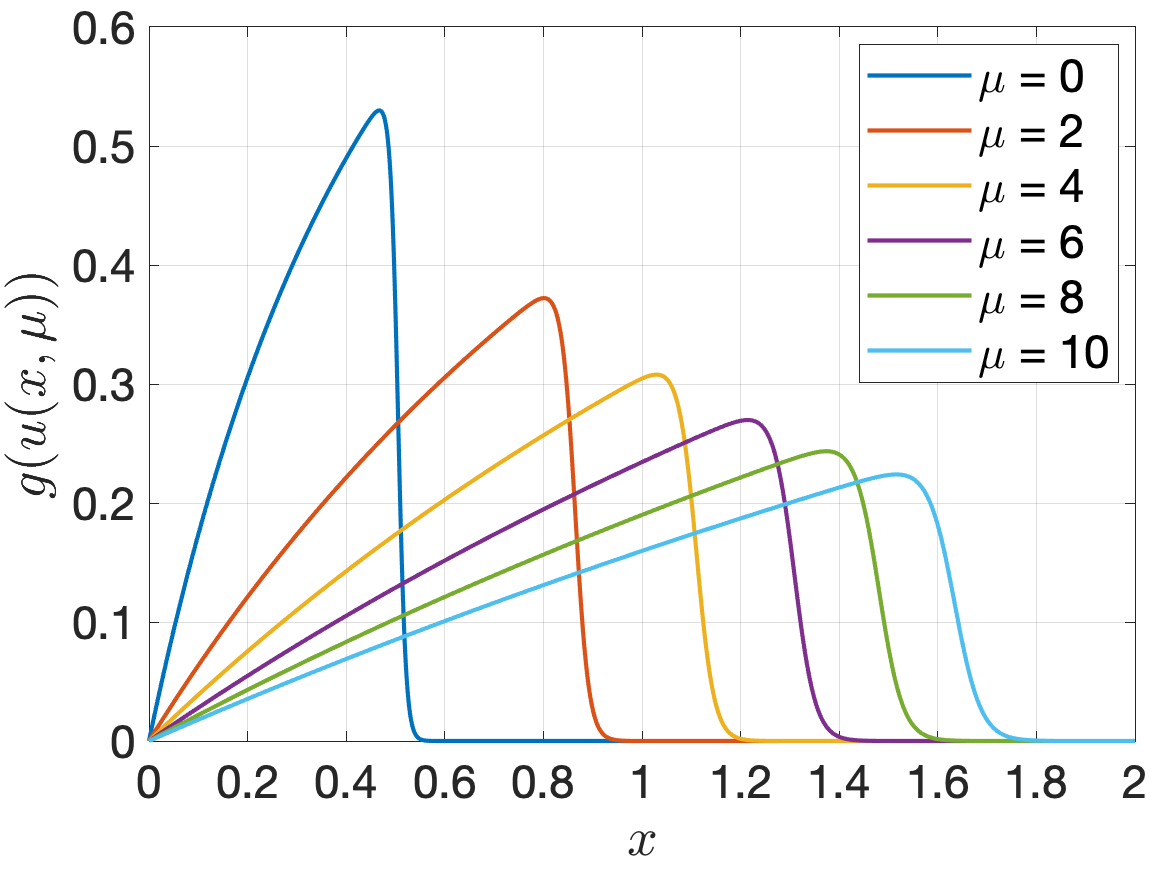}
		\caption{Instances of $g(u(x,\mu))$.}
	\end{subfigure}
	\hfill
	\begin{subfigure}[b]{0.49\textwidth}
		\centering		\includegraphics[width=\textwidth]{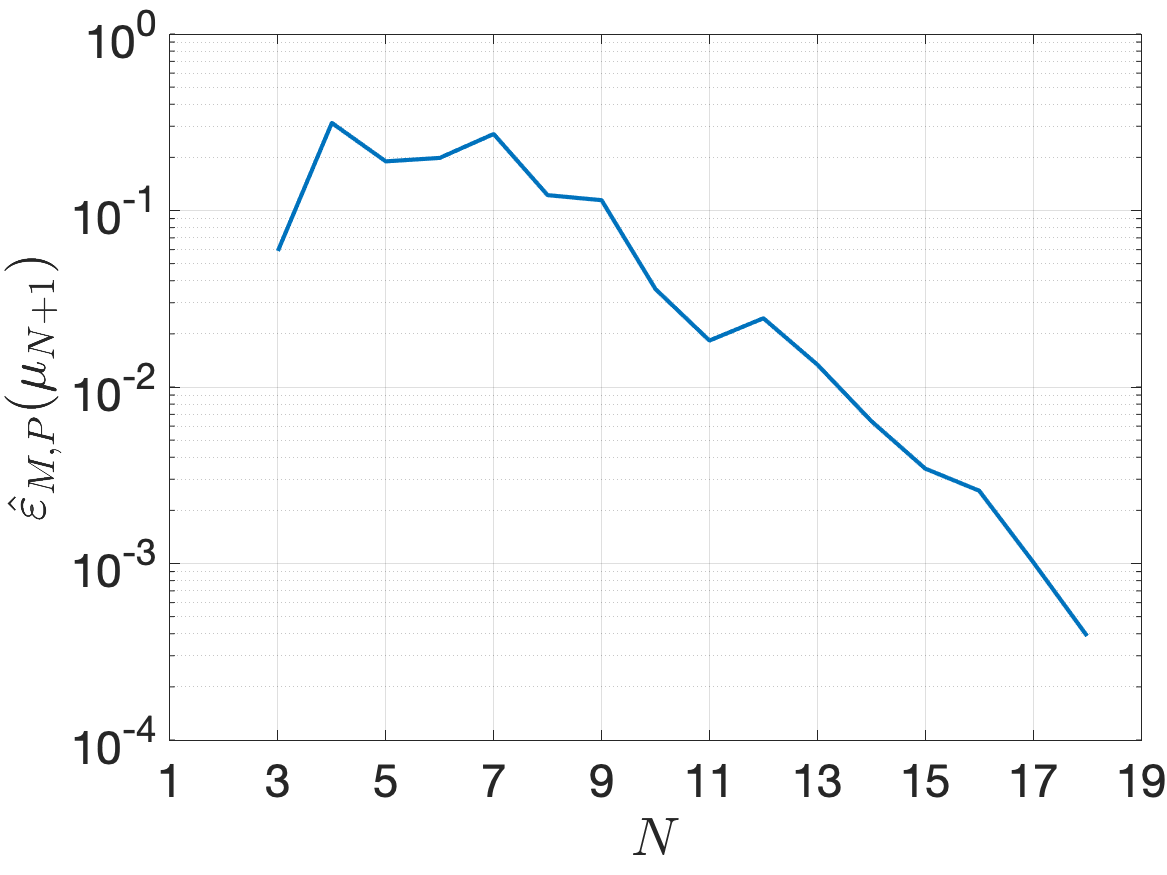}
		\caption{Maximum error estimate.}
	\end{subfigure}
	\caption{(a) Plots of $g(u(x,\mu))$ as a function of $x$ for  different values of $\mu$, and (b) the maximum error estimate $\hat{\varepsilon}_{M,P} (\mu_{N+1})$ in the greedy sampling.}
	\label{ex1fig1}
\end{figure}

 \begin{table}[h!]
\centering
\begin{tabular}{|c|c|c|c|c|c|c|c|}
\hline
$n$ & $\mu_n$ & $n$ & $\mu_n$ & $n$ & $\mu_n$ & $n$ & $\mu_n$ \\
\hline
4  & 0.4040  & 8  & 1.6162  & 12  & 0.2020  & 16  & 8.9899  \\ \hline
5  & 0.9091  & 9  & 7.9798  & 13  & 1.2121  & 17  & 5.6566  \\ \hline
6  & 6.5657  & 10 & 2.1212  & 14  & 0.6061  & 18  & 3.4343  \\ \hline
7  & 3.0303  & 11 & 3.9394  & 15  & 2.4242  & 19  & 4.3434  \\ \hline
\end{tabular}
\caption{Parameter points selected by the greedy sampling.}
\label{ex1tab1}
\end{table}

We use a test sample $\Xi^{\rm test}$ of $N^{\rm test} = 200$ parameter points distributed uniformly in the parameter space. The maximum error is defined as $ \varepsilon_M^{\max} = \max_{\mu \in \Xi^{\rm test}} \varepsilon_M(\mu)$ and the mean error as $\varepsilon_M^{\rm mean} = \frac{1}{N^{\rm test}}\sum_{\mu \in \Xi^{\rm test}} \varepsilon_M(\mu)$, where $\varepsilon_M(\mu)$ represents the interpolation error. Figure \ref{ex1fig2} plots $\varepsilon_M^{\rm max}$ and $\varepsilon_M^{\rm mean}$ as a function of $N$ for EIM, FOEIM, and SOEIM. These plots illustrate how each method performs in reducing both the worst-case and average errors as $N$ increases. Both FOEIM and SOEIM exhibit superior error reduction compared to EIM, with SOEIM showing errors several orders of magnitude lower than EIM. We define the mean error estimate as $\widehat{\varepsilon}_{M,P}^{\rm mean} = \frac{1}{N^{\rm test}}\sum_{\mu \in \Xi^{\rm test}} \widehat{\varepsilon}_{M,P}(\mu)$ and the mean effectivity as $\widehat{\eta}_{M,P}^{\rm mean} = \frac{1}{N^{\rm test}}\sum_{\mu \in \Xi^{\rm test}} \widehat{\varepsilon}_{M,P}(\mu)/\varepsilon_M(\mu)$. Tables \ref{ex1tab2} and \ref{ex1tab3} present the values of $\widehat{\varepsilon}_{M,P}^{\rm mean}$ and $\widehat{\eta}_{M,P}^{\rm mean}$ as a function $N$ and $M$ for FOEIM and SOEIM, respectively. 
The results indicate that the error estimates decrease rapidly as both $N$ and $M$ increase. Additionally, the error estimates are highly accurate, as reflected by the mean effectivities being consistently less than 5. This demonstrates that the estimation method provides sharp bounds on the actual error, ensuring efficiency, accuracy and reliability in the interpolation.

\begin{figure}[htbp]
	\centering
	\begin{subfigure}[b]{0.49\textwidth}
		\centering		\includegraphics[width=\textwidth]{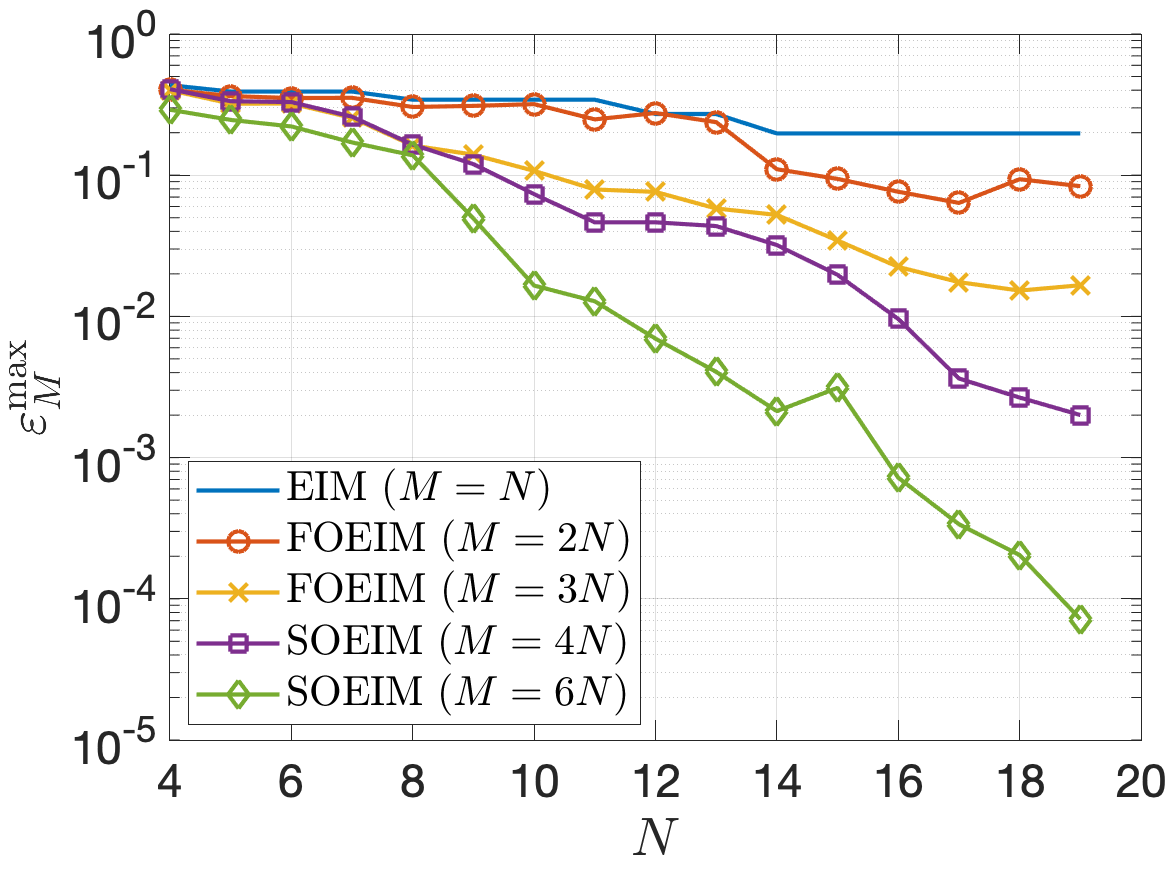}
		\caption{Maximum test error.}
	\end{subfigure}
	\hfill
	\begin{subfigure}[b]{0.49\textwidth}
		\centering		\includegraphics[width=\textwidth]{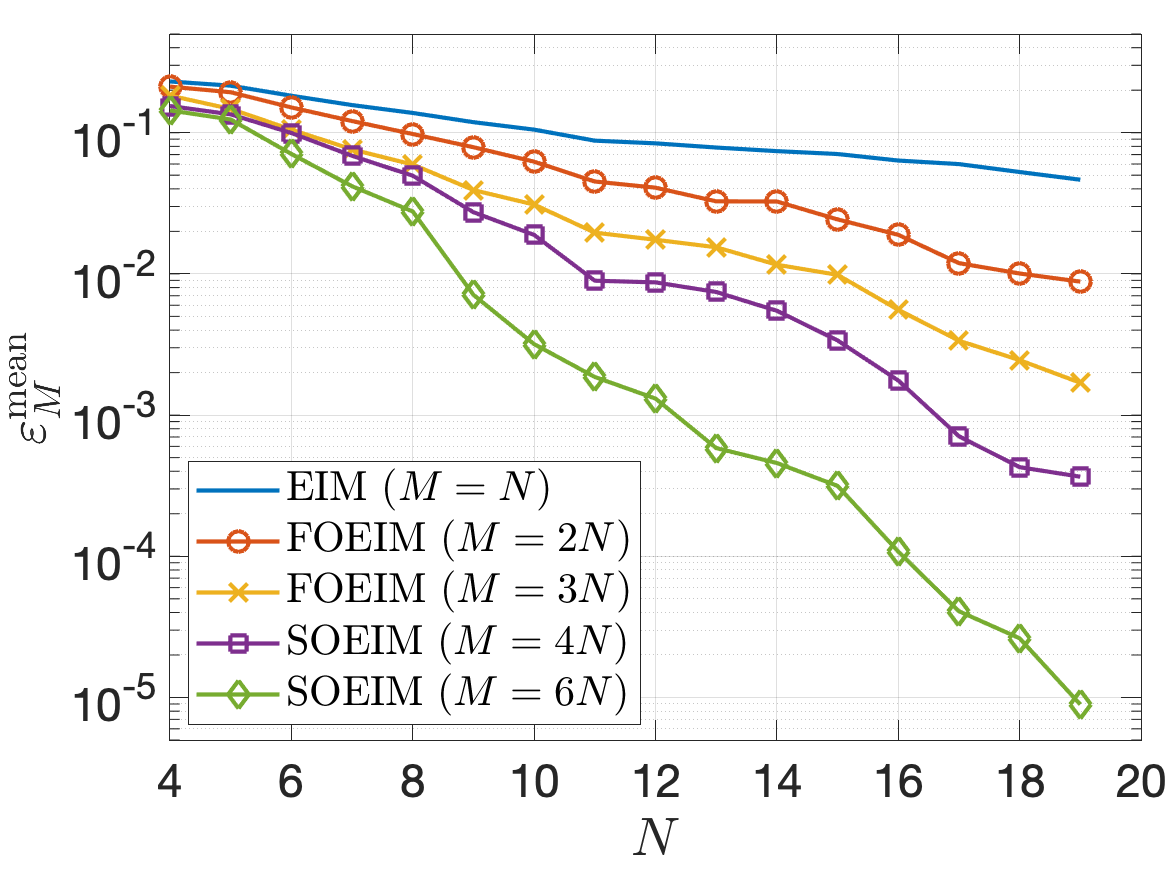}
		\caption{Mean test error.}
	\end{subfigure}
	\caption{Comparison of accuracy between EIM, FOEIM, and SOEIM: (a) the maximum test error and (b) the mean test error as a function of $N$.}
	\label{ex1fig2}
\end{figure}

\begin{table}[htbp]
\centering
\small
	\begin{tabular}{|c||cc|cc|cc|cc|}
		\cline{1-7}
    &		 
	 \multicolumn{2}{|c|}{$M=N$} & \multicolumn{2}{c|}{$M=2N$} & 
		 \multicolumn{2}{c|}{$M=3N$} \\   
   $N$ & $\widehat{\varepsilon}_{M,P}^{\rm mean}$ & $\widehat{\eta}_{M,P}^{\rm mean}$ & $\widehat{\varepsilon}_{M,P}^{\rm mean}$ & $\widehat{\eta}_{M,P}^{\rm mean}$ & $\widehat{\varepsilon}_{M,P}^{\rm mean}$ & $\widehat{\eta}_{M,P}^{\rm mean}$ \\
		\cline{1-7}
 4  &  1.64\mbox{e-}1  &   0.86  &  3.53\mbox{e-}1  &  1.61  &  3.16\mbox{e-}1  &  1.86  \\  
  7  &  1.72\mbox{e-}1  &  1.43  &  1.92\mbox{e-}1  &  1.47  &  7.93\mbox{e-}2  &  1.42  \\  
  10  &  1.68\mbox{e-}1  &  2.05  &  9.42\mbox{e-}2  &  1.48  &  7.45\mbox{e-}2  &  2.68  \\  
  13  &  1.50\mbox{e-}1  &  2.15  &  5.17\mbox{e-}2  &  1.84  &  3.44\mbox{e-}2  &  2.57  \\  
  16  &  1.38\mbox{e-}1  &  2.88  &  4.34\mbox{e-}2  &  2.57  &  1.33\mbox{e-}2  &  2.89  \\  
  19  &  1.07\mbox{e-}1  &  3.18  &  2.15\mbox{e-}2  &  2.83  &  7.23\mbox{e-}3  &  4.77   \\
		\hline
	\end{tabular}
	\caption{ $\widehat{\varepsilon}_{M,P}^{\rm mean}$ and $\widehat{\eta}_{M,P}^{\rm mean}$, where $P = N$, as a function of $N$ and $M$ for FOEIM.} 
	\label{ex1tab2}
\end{table}

\begin{table}[htbp]
\centering
\small
	\begin{tabular}{|c||cc|cc|cc|cc|}
		\cline{1-7}
    &		 
	 \multicolumn{2}{|c|}{$M=2N$} & \multicolumn{2}{c|}{$M=4N$} & 
		 \multicolumn{2}{c|}{$M=6N$} \\   
   $N$ & $\widehat{\varepsilon}_{M,P}^{\rm mean}$ & $\widehat{\eta}_{M,P}^{\rm mean}$ & $\widehat{\varepsilon}_{M,P}^{\rm mean}$ & $\widehat{\eta}_{M,P}^{\rm mean}$ & $\widehat{\varepsilon}_{M,P}^{\rm mean}$ & $\widehat{\eta}_{M,P}^{\rm mean}$ \\
		\cline{1-7}
  4  &  2.60\mbox{e-}1  &  1.19  &  1.13\mbox{e-}1  &   0.85  &  5.36\mbox{e-}2  &   0.45  \\  
  7  &  1.40\mbox{e-}1  &  1.26  &  8.13\mbox{e-}2  &  1.41  &  6.05\mbox{e-}2  &  1.61  \\  
  10  &  9.58\mbox{e-}2  &  1.62  &  2.44\mbox{e-}2  &  1.66  &  6.41\mbox{e-}3  &  2.12  \\  
  13  &  8.28\mbox{e-}2  &  2.28  &  2.01\mbox{e-}2  &  3.03  &  1.64\mbox{e-}3  &  3.00  \\  
  16  &  4.16\mbox{e-}2  &  2.11  &  4.28\mbox{e-}3  &  2.69  &  3.14\mbox{e-}4  &  2.95  \\  
  19  &  2.23\mbox{e-}2  &  2.67  &  8.43\mbox{e-}4  &  2.67  &  2.74\mbox{e-}5  &  3.44  \\  
		\hline
	\end{tabular}
	\caption{ $\widehat{\varepsilon}_{M,P}^{\rm mean}$ and $\widehat{\eta}_{M,P}^{\rm mean}$, where $P = N$, as a function of $N$ and $M$ for SOEIM.} 
	\label{ex1tab3}
\end{table}

Figure \ref{ex1fig3} compares the convergence of the mean error $\varepsilon_M^{\rm mean}$ as a function of $N$ for three sampling methods: Greedy Sampling, Extended Chebyshev, and Uniform Distribution. As $N$ increases, all methods demonstrate error reduction, with Greedy Sampling and Extended Chebyshev showing faster convergence and lower overall errors than Uniform Distribution. Greedy Sampling, in particular, exhibits superior performance at larger values of $N$, indicating its ability to better capture important features with fewer parameter points. Uniform Distribution converges more slowly, yielding the highest errors at larger values of $N$. A significant advantage of Greedy Sampling is its production of hierarchical samples, meaning new parameter points can be added incrementally to improve accuracy without reevaluating the entire set. This hierarchical nature makes Greedy Sampling highly flexible and efficient for ROM construction. In contrast, Extended Chebyshev, while competitive in terms of convergence rate, lacks the hierarchical refinement, limiting its adaptability.

\begin{figure}[htbp]
	\centering
 \includegraphics[width=0.7\textwidth]{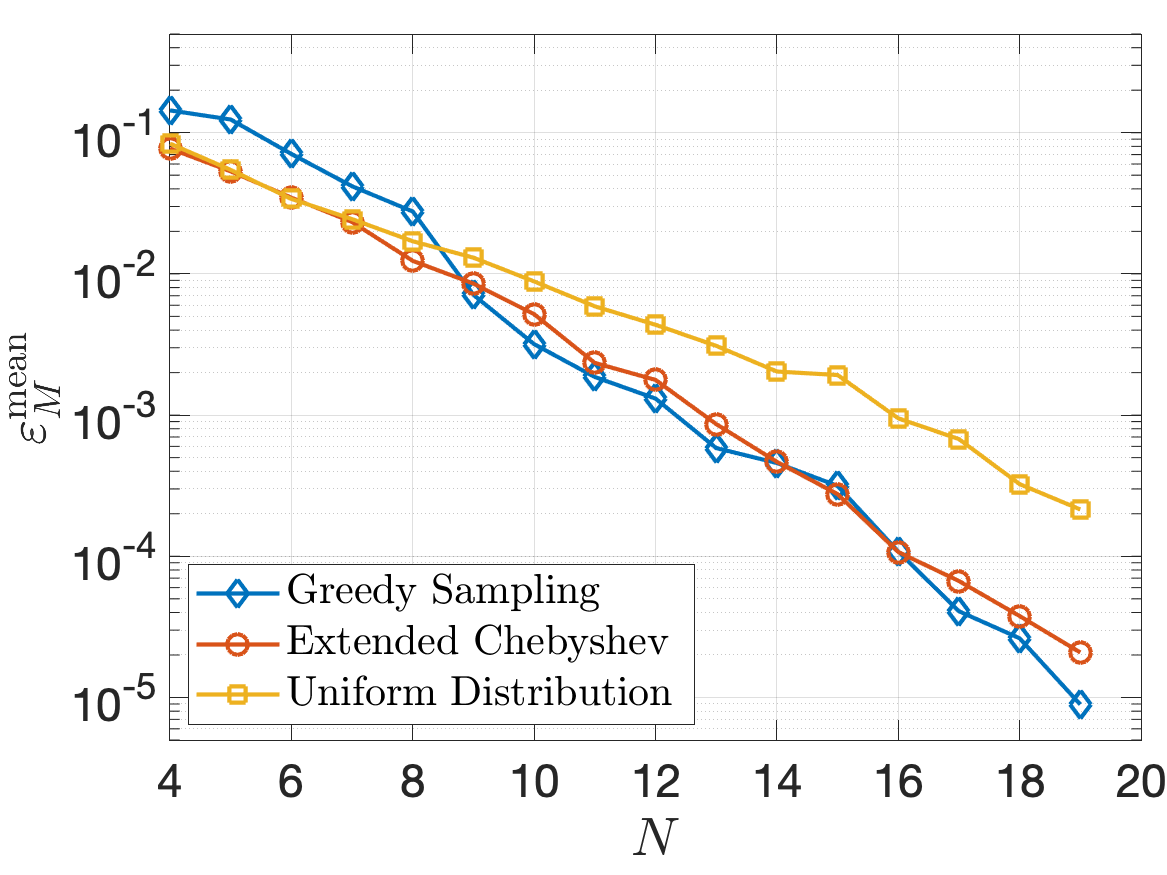}
 \caption{Convergence of the mean error $\varepsilon_M^{\rm mean}$ for SOEIM with $M=6N$ as a function of $N$ for Greedy Sampling, Extended Chebyshev, and Uniform Distribution.}
	\label{ex1fig3}
\end{figure}

\section{Numerical results}

In this section, we present numerical results from two parametrized nonlinear PDEs to assess and compare the performance of the H-GN and GN-H methods. Specifically, we investigate the impact of empirical interpolation methods on the convergence of these approaches relative to the standard GN method. For the H-GN method, we use three different interpolation methods -- EIM, FOEIM, and SOEIM -- to approximate the nonlinear terms, resulting in the EIM-GN, FOEIM-GN, and SOEIM-GN schemes, respectively.  In the GN-H method, SOEIM is applied to nonlinear terms in the residual vector, while FOEIM is used for the Jacobian matrix, producing the GN-SOEIM scheme. To balance computational efficiency and accuracy, we set different values of $M$ for each method. For EIM-GN, we use $M=N$, ensuring a basic level of approximation with the empirical interpolation method. For FOEIM-GN, 
$M=2N$ is used to leverage first-order partial derivatives, enhancing the approximation. For SOEIM-GN, we increase to $M=4N$ to incorporate second-order derivatives for more precise nonlinear term handling. In the GN-SOEIM method, we allocate $M=8N$ for the residual vector and $M=2N$ for the Jacobian matrix, allowing for higher accuracy in the residual while maintaining computational efficiency in the Jacobian approximation.

These methods are evaluated based on accuracy, computational cost, and convergence behavior. To assess the accuracy, we define the following  errors  
\begin{equation}
\epsilon^u_{N}(\bm \mu) = \|u(\bm \mu) - u_{N}(\bm \mu)\|_X, \qquad \epsilon^s_{N}(\bm \mu) = |s(\bm \mu) - s_{N}(\bm \mu)| ,
\end{equation}
and the average errors
\begin{equation}
\bar{\epsilon}_{N}^u =  \frac{1}{N^{\rm test}}\sum_{\bm \mu \in \Xi^{\rm test}} \epsilon_N^u (\bm \mu), \qquad  \bar{\epsilon}_{N}^s = \frac{1}{N^{\rm test}} \sum_{\bm \mu \in \Xi^{\rm test}}\epsilon_N^s (\bm \mu)  .  
\end{equation}  
where $\Xi^{\rm test}$ is a test sample of $N^{\rm test} = 900$ parameter points  distributed uniformly in the parameter domain. To measure the convergence of the H-GN and GN-H methods relative to the standard GN method, we define the effectivities 
\begin{equation}
\eta_{N}^u(\bm \mu) = \frac{\epsilon^u_{N}(\bm \mu)}{\epsilon^{u, \rm GN}_N(\bm \mu)}, \qquad \eta_{N}^s(\bm \mu) = \frac{\epsilon^s_{N}(\bm \mu)}{\epsilon^{s, \rm GN}_N(\bm \mu)},  
\end{equation}
where $\epsilon^{u, \rm GN}_N(\bm \mu)$ and $\epsilon^{s, \rm GN}_N(\bm \mu)$ are the errors for the GN method. The average effectivities $\bar{\eta}_{N}^u$ and $\bar{\eta}_{N}^s$ are similarly defined. 

%we use SOEIM and FOEIM to approximate the nonlinear terms in the resisdua, resulting in the EIM-GN, FOEIM-GN, and SOEIM-GN schemes, respectively.

%In this section, we present numerical results from two parametrized nonlinear PDEs to compare the performance of both the H-GN and GN-H methods.  We explore how the empirical interpolation methods impact the convergence of both the H-GN and GN-H methods relative to the standard GN method. When EIM, FOEIM, and SOEIM are used to approximate the nonlinear terms for the H-GN method, the resulting schemes are named H-GN-EIM, H-GN-FOEIM, and H-GN-SOEIM, respectively.    

%In the GN-H method, empirical interpolation is applied to the nonlinear terms in the residual vector and Jacobian matrix of the linearized system. The empirical interpolation applied to the residual vector is higher order than the one applied to the Jacobian matrix, allowing for improved accuracy in capturing nonlinearities. Specifically, if SOEIM is applied to the residual vector, then FOEIM is used for the Jacobian matrix.

% GN, H-GN (EIM), H-GN (FOEIM), H-GN (SOEIM), GN-H (SOEIM-FOEIM)

\subsection{A nonlinear elliptic problem}

We consider the following parametrized nonlinear elliptic PDE 
\begin{equation}
-\nabla^2u^{\rm e} + \mu_1 \exp(\sin(\mu_2 u^{\rm e})) = 100 \sin(2\pi x_{1})\cos(2 \pi x_{2}), \quad \mbox{in } \Omega, 
\end{equation} 
 with homogeneous Dirichlet condition on the boundary $\partial \Omega$, where $\Omega = (0,1)^2$ and $\bm \mu  \in {\cal D} \equiv [1, 2 \pi]^2$.  The output of interest is the average of the field variable over the
physical domain. The weak formulation is then stated as: given $\bm \mu \in
{\cal D}$, find $s(\bm \mu) = \int_\Omega u(\bm \mu)$, where $u(\bm \mu) \in X \subset H_0^1(\Omega) \equiv \{v \in
H^1(\Omega) \mbox{ } | \mbox{ } v|_{\partial \Omega} = 0\}$ is the solution
of
\begin{equation}
a(u(\bm \mu), v) + \int_\Omega g(u(\bm \mu), \bm \mu) \, v = f(v), \quad \forall  v
\in X \ , 
\label{eq:7-6}
\end{equation}
where
\begin{equation}
a(w, v) = \int_\Omega \nabla w \cdot \nabla v, \quad f(v) = 100
\int_\Omega \sin(2\pi x_{1}) \, \cos(2 \pi x_{2}) \, v,  
\label{eq:7-6a}
\end{equation}
and
\begin{equation}
g(u(\bm \mu), \bm \mu) = \mu_1  \exp(\sin(\mu_2 u(\bm \mu))).  
\label{eq:7-6b}
\end{equation}
The finite element (FE) approximation space is $X = \{v \in H_0^1(\Omega) : v|_K \in \mathcal{P}^3(T), \  \forall T \in \mathcal{T}_h \}$, where $\mathcal{P}^3(T)$ is a space of polynomials of degree $3$ on an element $T \in \mathcal{T}_h$ and $\mathcal{T}_h$ is a finite element grid of $32 \times 32$ quadrilaterals. The dimension of the FE space is $\mathcal{N} = 9409$.

Figure \ref{ex2fig1} illustrates the parameter points selected via greedy sampling. The plot reveals that the majority of the points are concentrated along the boundary of the parameter domain, with a notable clustering near the top boundary. This distribution suggests that the regions near the boundary, particularly the upper edge, exhibit greater variability or complexity in the solution manifold, requiring more refined sampling. The greedy sampling algorithm effectively targets these areas to ensure better approximation accuracy while minimizing the number of points needed to capture the solution manifold. Figure \ref{ex2fig1} displays the interpolation points selected by the SOEIM method for $N=15$ parameter sample points. The interpolation points are well-distributed across the physical domain, ensuring good coverage for the interpolation. However, it is notable that only one of the interpolation points is located directly on the boundary of the physical domain. This can be attributed to the fact that the solution vanishes to zero along the boundary, making boundary points less critical for capturing the essential variation of the solution within the domain.

\begin{figure}[htbp]
	\centering
	\begin{subfigure}[b]{0.49\textwidth}
		\centering		\includegraphics[width=\textwidth]{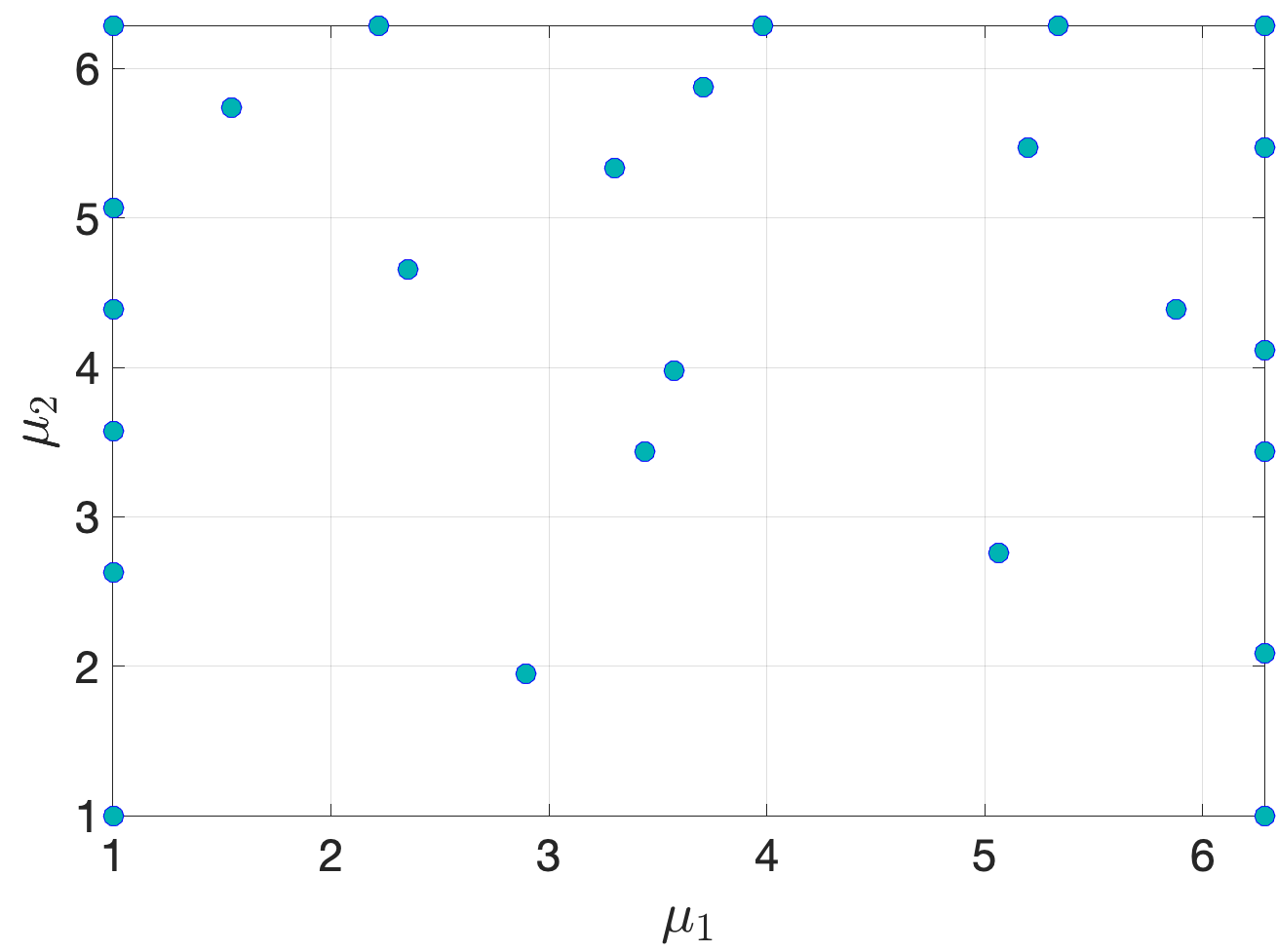}
		\caption{Parameter points in the sample $S_N$ for $N=25$.}
	\end{subfigure}
	\hfill
	\begin{subfigure}[b]{0.49\textwidth}
		\centering		\includegraphics[width=\textwidth]{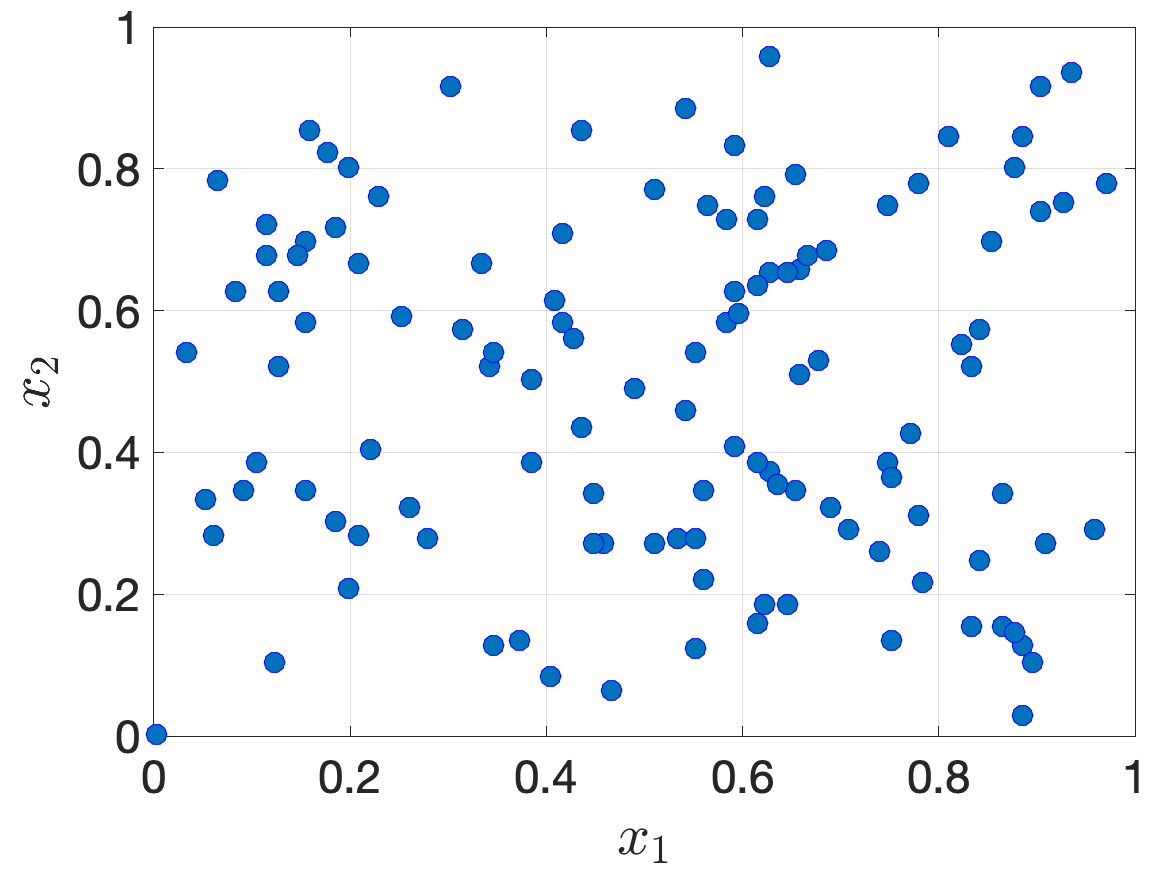}
		\caption{Interpolation points for $N=15$.}
	\end{subfigure}
	\caption{(a) distribution of the parameter sample points selected using the greedy sampling, and (b) distribution of the interpolation points for the SOEIM method for $N=15$.}
	\label{ex2fig1}
\end{figure}

Figure \ref{ex2fig2} presents the convergence of the mean solution error $\bar{\epsilon}_{N}^u$  and the mean output error $\bar{\epsilon}_{N}^s$ as functions of $N$ for five methods: EIM-GN, FOEIM-GN, SOEIM-GN, GN-SOEIM, and the standard GN method. As $N$ increases, all methods exhibit a clear reduction in error. In particular, FOEIM-GN, SOEIM-GN, and GN-SOEIM converge significantly faster than EIM-GN. The GN-SOEIM method closely mirrors the accuracy of the standard GN method, becoming almost indistinguishable for $N$ values greater than 10. This demonstrates that high-order interpolation significantly improves convergence, approaching the full GN performance while maintaining computational efficiency.

\begin{figure}[htbp]
	\centering
	\begin{subfigure}[b]{0.49\textwidth}
		\centering		\includegraphics[width=\textwidth]{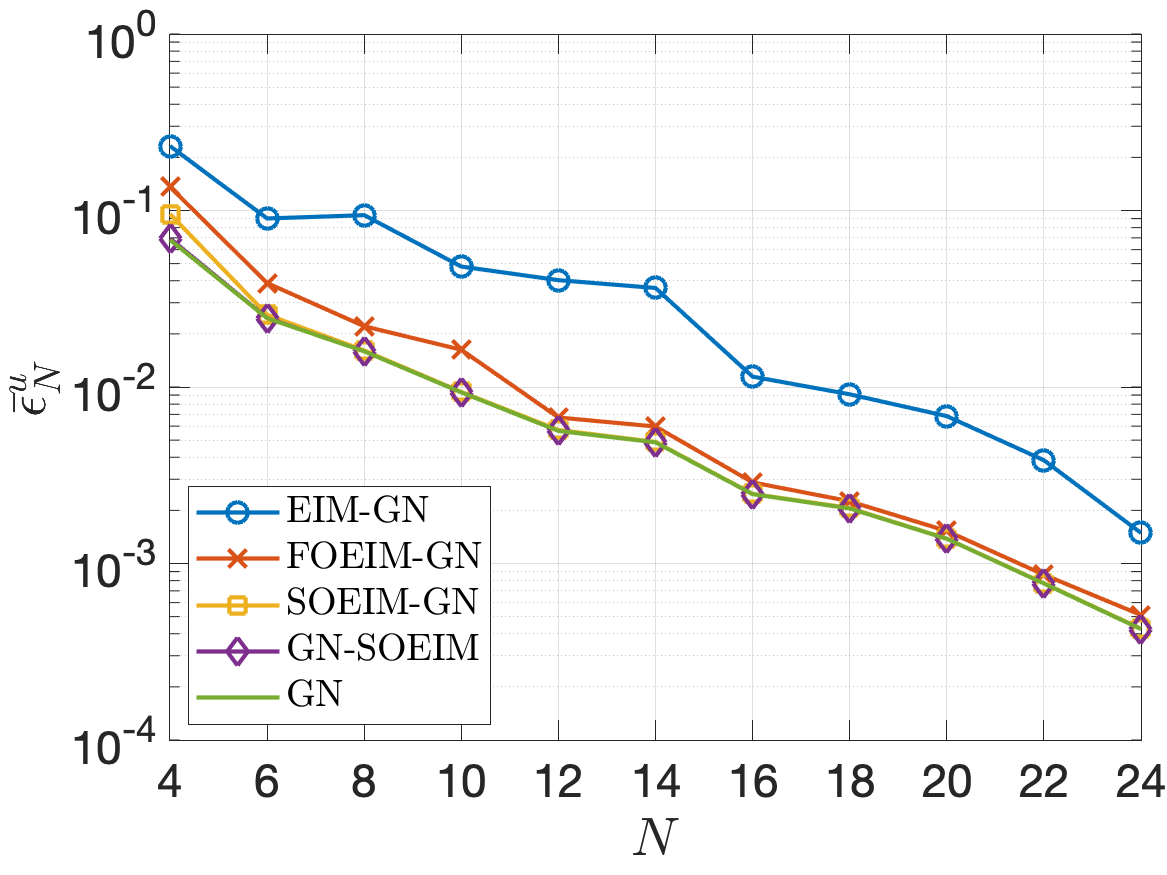}
		\caption{Average solution error $\bar{\epsilon}_{N}^u$.}
	\end{subfigure}
	\hfill
	\begin{subfigure}[b]{0.49\textwidth}
		\centering		\includegraphics[width=\textwidth]{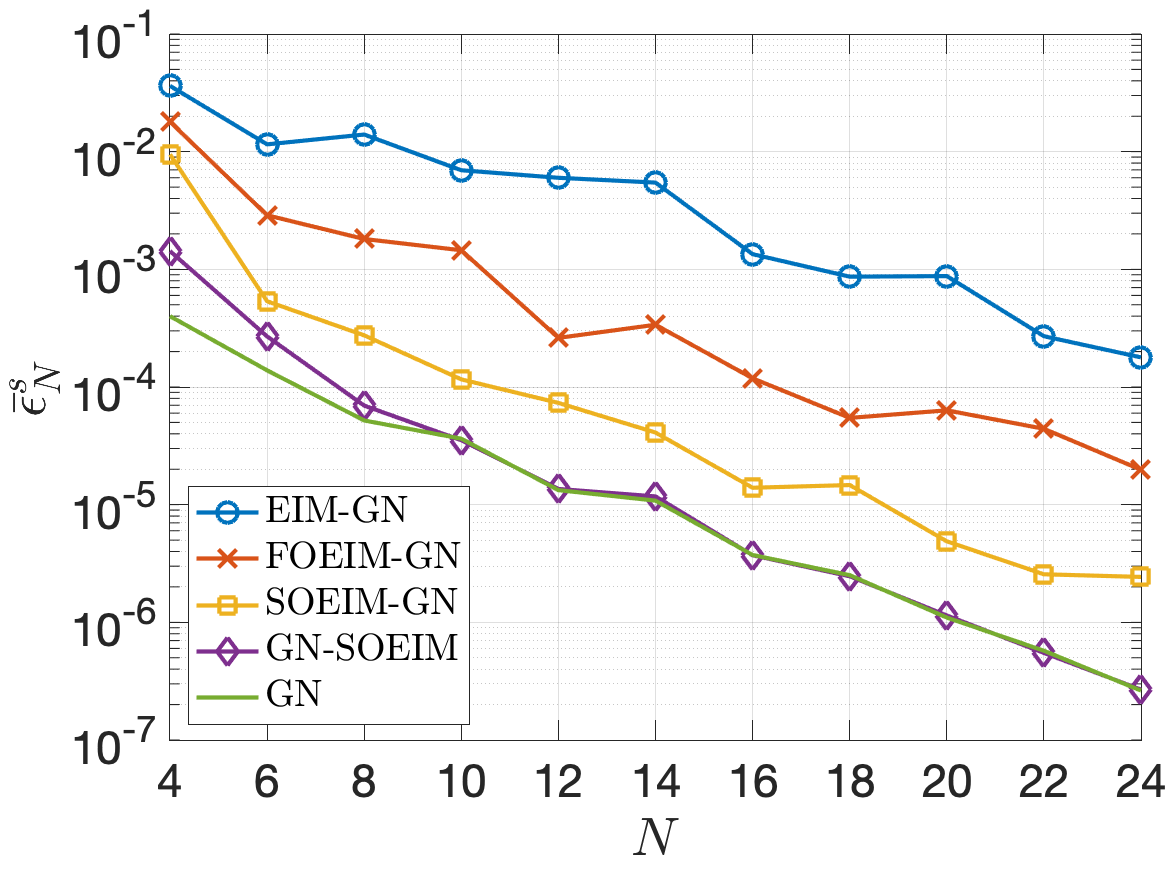}
		\caption{Average output error $\bar{\epsilon}_{N}^s$.}
	\end{subfigure}
	\caption{Comparison of accuracy between EIM-GN, FOEIM-GN, SOEIM-GN, GN-SOEIM, and GN methods.}
	\label{ex2fig2}
\end{figure}

Table \ref{ex2tab1} presents the average effectivities $\bar{\eta}_{N}^u$ (for the solution error) and 
$\bar{\eta}_{N}^s$ (for the output error), as a function of $N$ for EIM-GN, FOEIM-GN, SOEIM-GN, and GN-SOEIM. Across all values of $N$, GN-SOEIM consistently achieves the lowest effectivities. FOEIM-GN and SOEIM-GN perform significantly better than EIM-GN, especially as  $N$ increases. For GN-SOEIM, the  effectivities are very close to 1, indicating a very accurate and efficient approximation. In contrast, EIM-GN exhibits much larger effectivities, particularly in $\bar{\eta}_{N}^s$ reaching values over 1000 for higher $N$, demonstrating poorer performance in approximating the nonlinear terms. Notably, the output effectivties are considerably larger than the solution effectivities for EIM-GN, FOEIM-GN, and SOEIM-GN. This indicates that these methods struggle more with accurately approximating the output compared to the solution itself. Particularly in EIM-GN, the output effectivities grow substantially as  $N$ increases. In contrast, GN-SOEIM maintains consistently low output effectivities, demonstrating superior accuracy in capturing both solution and output errors. This suggests that higher-order interpolation methods, especially GN-SOEIM, are far more effective in improving output approximations.

\begin{table}[htbp]
\centering
\small
	\begin{tabular}{|c||cc|cc|cc|cc|cc|}
		\cline{1-9}
    &		 
	 \multicolumn{2}{|c|}{EIM-GN} & \multicolumn{2}{c|}{FOEIM-GN} & 
		 \multicolumn{2}{c|}{SOEIM-GN} &
		 \multicolumn{2}{c|}{GN-SOEIM} \\   
   $N$ & $\bar{\eta}_{N}^u$ & $\bar{\eta}_{N}^s$ & $\bar{\eta}_{N}^u$ & $\bar{\eta}_{N}^s$ & $\bar{\eta}_{N}^u$ & $\bar{\eta}_{N}^s$ & $\bar{\eta}_{N}^u$ & $\bar{\eta}_{N}^s$ \\
		\cline{1-9}
4  &  3.12  &  206.41  &  1.98  &  66.9  &  1.27  &  42.04  &  1.01  &  3.72  \\  
 6  &  4.02  &  325.04  &  2.03  &  63.95  &  1.06  &  6.49  &  1.00  &  1.51  \\  
 8  &  6.15  &  550.4  &  1.68  &  69.24  &  1.02  &  8.28  &  1.00  &  1.21  \\  
 10  &  5.44  &  490.66  &  2.03  &  75.66  &  1.02  &  6.94  &  1.00  &  1.22  \\  
 12  &  7.51  &  835.83  &  1.36  &  45.09  &  1.03  &  8.89  &  1.00  &  1.11  \\  
 14  &  7.04  &  777.49  &  1.54  &  59.95  &  1.03  &  6.25  &  1.00  &  1.26  \\  
 16  &  5.04  &  660.79  &  1.36  &  67.44  &  1.02  &  6.49  &  1.00  &  1.14  \\  
 18  &  4.90  &  607.98  &  1.34  &  55.63  &  1.03  &  8.05  &  1.00  &  1.03  \\  
 20  &  5.19  &  1153.0  &  1.38  &  101.82  &  1.01  &  8.40  &  1.00  &  1.13  \\  
 22  &  5.36  &  842.1  &  1.30  &  103.13  &  1.01  &  7.86  &  1.00  &  1.06  \\  
 24  &  4.02  &  1165.94  &  1.55  &  97.63  &  1.02  &  9.00  &  1.00  &  1.17  \\    
		\hline
	\end{tabular}
	\caption{Average effectivities as a function of $N$ for EIM-GN, FOEIM-GN, SOEIM-GN, and GN-SOEIM.} 
	\label{ex2tab1}
\end{table}

%We present in Table~\ref{tab2} the online computational times to calculate $s_{N}(\bm \mu)$ and $s_{N,M}(\bm \mu)$ as a function of $N$. The values are normalized with respect to the computational time  of the truth approximation output $s(\bm \mu)$. The computational saving is significant: for an relative accuracy of about 0.0001 ($N = 25$, $M = 200$) in the output, the reduction in online cost is more than a factor of 1000; this is mainly because the matrix assembly of the nonlinear terms for the truth approximation is computationally very expensive. The standard RB approximation has similar computational times as the truth FE approximation, and is between 100 and 1000 times slower than the RB approximation via empirical interpolation. \revise{We notice that using $M = N$ often requires more Newton iterations to converge than using $M > N$ especially when $N$ is relatively small. As a result, the online computational time with $M=N$ is slightly higher than that $M>N$ especially for $N = 9$ and $N=16$.} 

Table \ref{ex2tab2} shows the computational speedup for various model reduction methods -- GN, EIM-GN, FOEIM-GN, SOEIM-GN, and GN-SOEIM -- compared to the finite element method (FEM) for different values of $N$. Except for the standard GN method, the other model reduction methods provide substantial speedups relative to FEM. Across all values of $N$, the GN method achieves a modest speedup (around 2.2–2.7x) compared to FEM. In contrast, the EIM-GN method provides the highest speedup, ranging from over 5000x at $N=4$ to about 2400x at $N=24$, though its performance declines as 
$N$ increases. FOEIM-GN and SOEIM-GN demonstrate more consistent speedups (around 1700x–4200x), offering a good balance between computational efficiency and accuracy. GN-SOEIM maintains strong performance with speedup factors around 2000x–4600x. While both GN-SOEIM and FOEIM-GN exhibit similar speedup factors across a range of values for $N$, GN-SOEIM consistently outperforms FOEIM-GN in terms of accuracy. This superior performance stems from GN-SOEIM's use of second-order derivatives to approximate the residual and first-order derivatives for the Jacobian matrix, providing a more flexible hyperreduction strategy. The ability to balance these two interpolation techniques allows GN-SOEIM to effectively handle nonlinearities, offering a better trade-off between computational efficiency and accuracy.

\begin{table}[htbp]
\centering
\small
	\begin{tabular}{|c||c|c|c|c|c|}
		\cline{1-6}
  $N$  & \mbox{ } GN \mbox{ } & EIM-GN & FOEIM-GN  & SOEIM-GN & GN-SOEIM \\     
		\cline{1-6}
 4    &  2.52  &  5064.67  &  4258.1  &  3970.35  &  4601.2  \\  
 6    &  2.73  &  4052.52  &  4163.69  &  3555.97  &  4239.4  \\  
 8    &  2.76  &  3763.17  &  3938.08  &  3417.1  &  4480.89  \\  
 10   &  2.73  &  3589.21  &  3561.61  &  3537.51  &  4064.43  \\  
 12   &  2.66  &  3273.34  &  3467.02  &  3284.13  &  3813.88  \\  
 14   &  2.48  &  3049.29  &  3117.96  &  2838.7  &  3419.49  \\  
 16   &  2.40  &  3211.92  &  3011.17  &  2711.65  &  3390.79  \\  
 18   &  2.18  &  2566.5  &  2385.51  &  2076.51  &  2630.22  \\  
 20   &  2.26  &  2598.66  &  2412.79  &  2052.85  &  2391.55  \\  
 22   &  2.24  &  2427.95  &  2208.71  &  1795.99  &  2217.08  \\  
 24   &  2.27  &  2364.06  &  2122.91  &  1676.67  &  2070.6  \\    
		\hline
	\end{tabular}
	\caption{Computational speedup  relative to the finite element method (FEM) for different model reduction techniques (GN, EIM-GN, FOEIM-GN, SOEIM-GN, GN-SOEIM) as a function of $N$. The speedup is calculated as the ratio between the computational time of FEM and the online computational time of ROM.} 
	\label{ex2tab2}
\end{table}

\subsection{A nonlinear convection-diffusion-reaction problem}

We consider a  nonlinear convection-diffusion-reaction problem  
\begin{equation}
- \nabla^2u^{\rm e} +\nabla \cdot (\bm \mu (u^{\rm e})^2) +  g(u^{\rm e}) = 0, \quad \mbox{in } \Omega , 
\end{equation} 
with homogeneous Dirichlet condition $u^{\rm e}=0$ on  $\partial \Omega$. Here $\Omega = (0,1)^2$ is a unit square domain, while $\bm \mu = (\mu_1, \mu_2)$ is the parameter vector in a parameter domain $\mathcal{D} \equiv [0, 20] \times [0, 20]$. The reaction term is a nonlinear function of the state variable  $g(u^{\rm e}) = -2 \pi \exp(\sin(2 \pi  u^{\rm e}))$. 

Let $X \in H_0^1(\Omega)$ be a finite element (FE) approximation space of dimension $\mathcal{N}$ with $X = \{v \in H_0^1(\Omega) : v|_K \in \mathcal{P}^3(T), \  \forall T \in \mathcal{T}_h \}$, where $\mathcal{P}^3(T)$ is a space of polynomials of degree $3$ on an element $T \in \mathcal{T}_h$ and $\mathcal{T}_h$ is a finite element grid of $32 \times 32$ quadrilaterals. The dimension of the FE space $X$ is $\mathcal{N} = 9409$. The FE approximation $u(\bm \mu) \in X$ of the exact solution $u^{\rm e}(\bm \mu)$ is the solution of
\begin{equation}
\int_\Omega \nabla u \cdot \nabla v +  \int_\Omega \bm f(u, \bm \mu) \cdot \nabla v +  \int_\Omega g(u) \, v = 0, \quad \forall  v
\in X \ ,
\label{eq:7-6c}
\end{equation}
with $\bm f(u, \bm \mu) = -\bm \mu u^2$ being the nonlinear convection term. The output of interest is evaluated as $s(\bm \mu) = \ell^{O}(u(\bm \mu))$ with $\ell^{O}(v) \equiv \int_{\Omega} v$. Figure \ref{ex3fig1} shows four instances of $u(\bm x, \bm \mu)$ corresponding to the four corners of the parameter domain.

\begin{figure}[htbp]
	\centering
	\begin{subfigure}[b]{0.49\textwidth}
		\centering		\includegraphics[width=\textwidth]{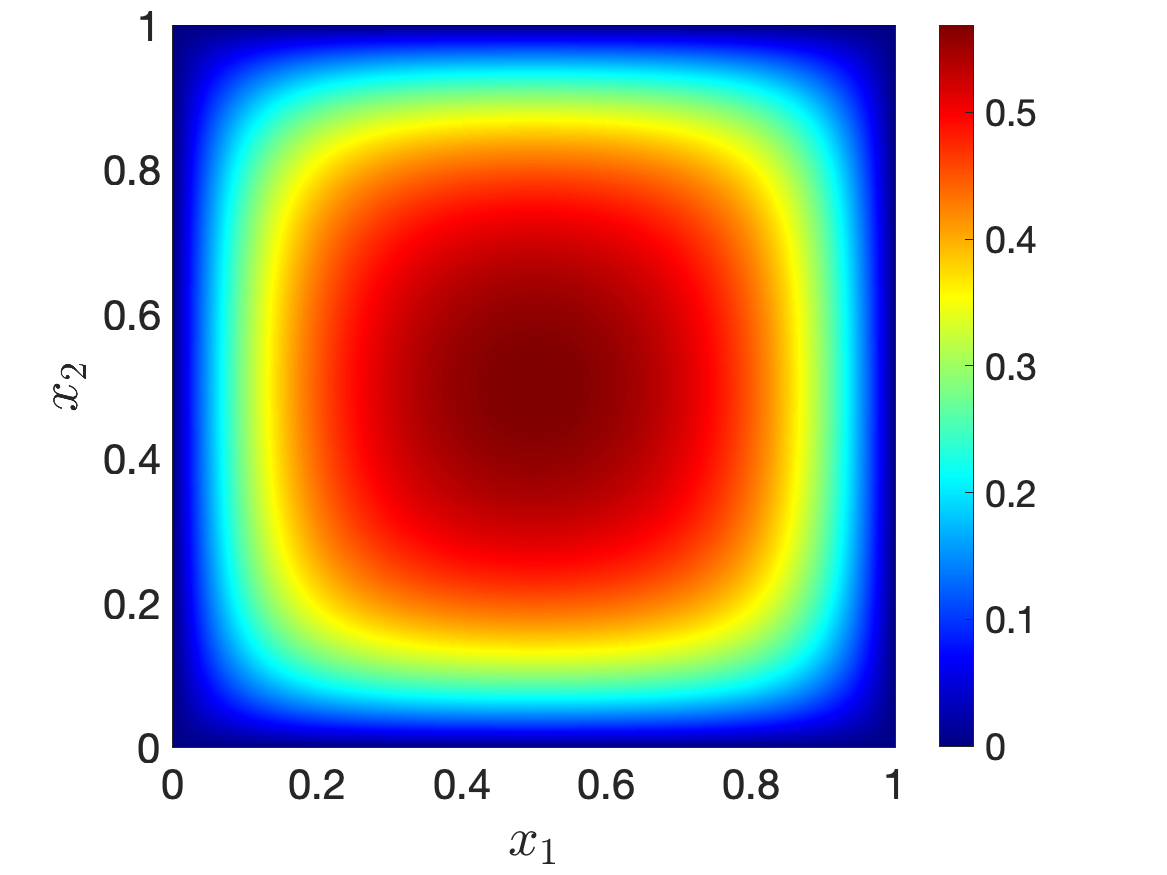}
		\caption{$\bm \mu = (0,0)$.}
	\end{subfigure}
	\hfill
	\begin{subfigure}[b]{0.49\textwidth}
		\centering		\includegraphics[width=\textwidth]{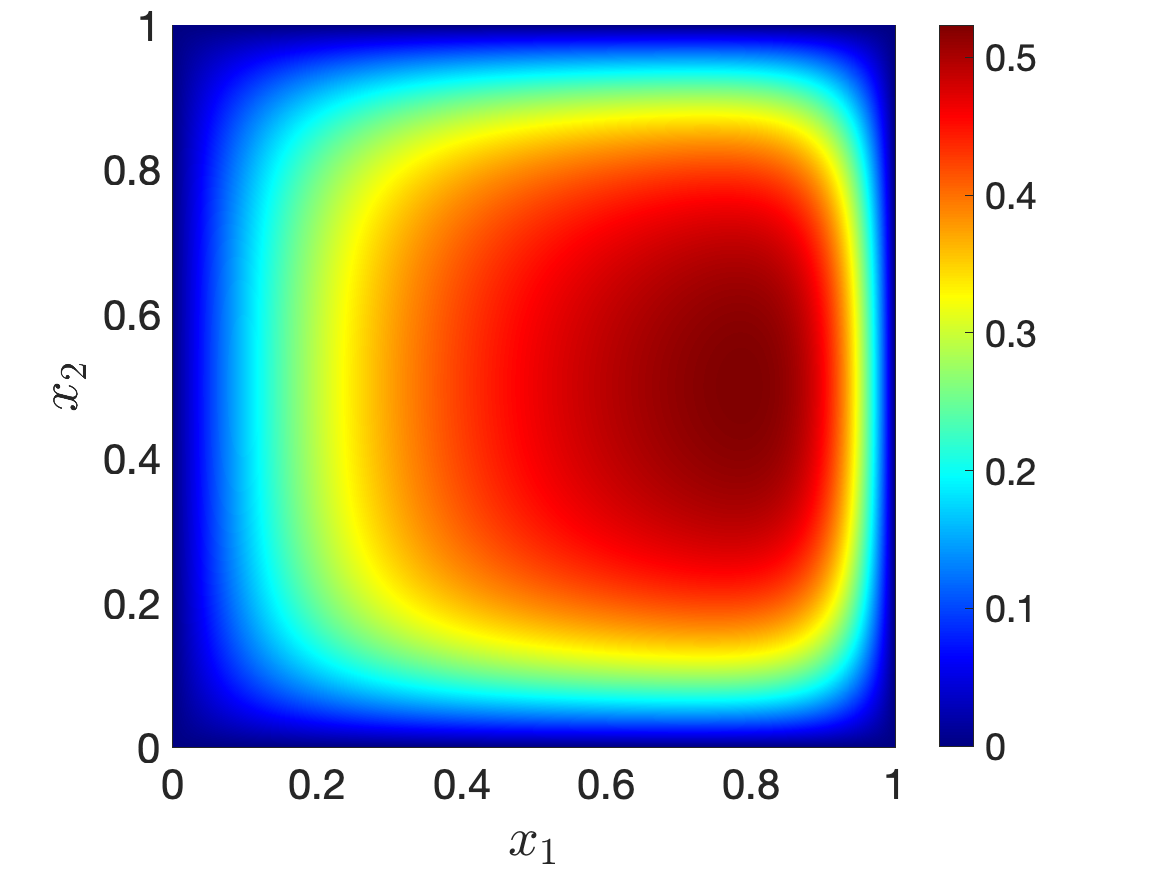}
		\caption{$\bm \mu = (20,0)$.}
	\end{subfigure} \\
        \begin{subfigure}[b]{0.49\textwidth}
		\centering		\includegraphics[width=\textwidth]{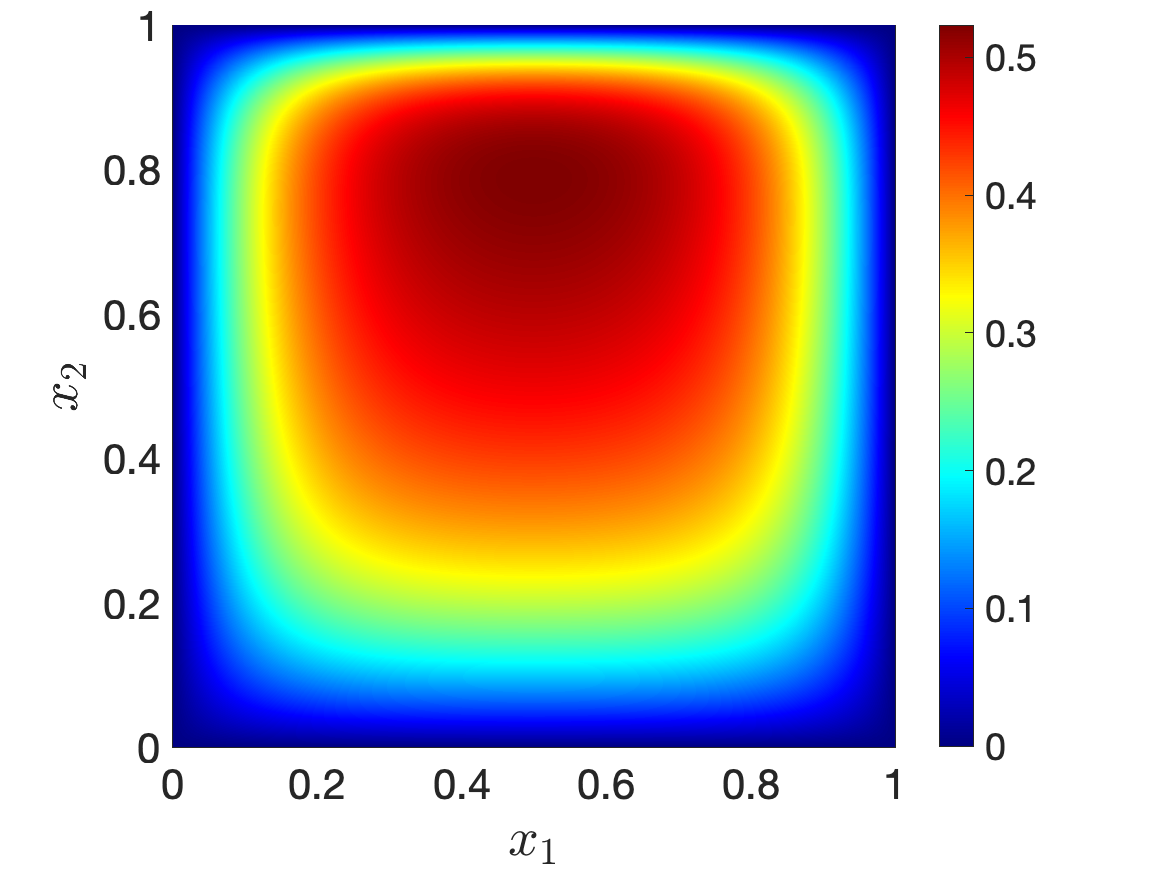}
		\caption{$\bm \mu = (0,20)$.}
	\end{subfigure}
	\hfill
	\begin{subfigure}[b]{0.49\textwidth}
		\centering		\includegraphics[width=\textwidth]{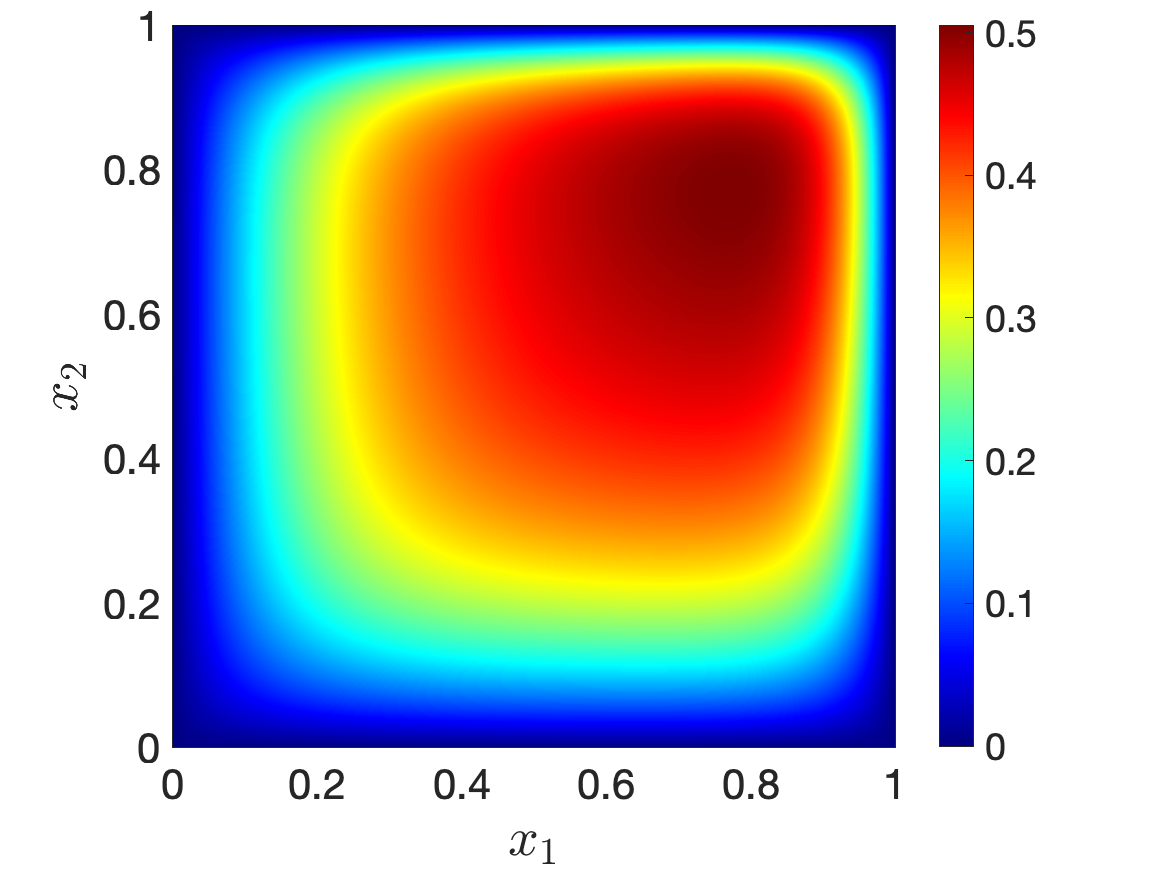}
		\caption{$\bm \mu = (20,20)$.}
	\end{subfigure}
	\caption{Four instances of the FOM solution $u(\bm x, \bm \mu)$.}
	\label{ex3fig1}
\end{figure}

Figure \ref{ex3fig2} illustrates the parameter points selected via greedy sampling. The plot reveals that the majority of the points are concentrated along the boundary of the parameter domain. This distribution suggests that the regions near the boundary exhibit greater variability or complexity in the solution manifold, requiring more refined sampling. Figure \ref{ex3fig2} displays the interpolation points selected by the SOEIM method for $N=25$ parameter sample points. The interpolation points are well-distributed across the physical domain. However, it is notable that only none of the interpolation points is located directly on the boundary of the physical domain since the solution vanishes to zero along the boundary, making boundary points less critical for capturing the essential variation of the solution within the domain. Figure \ref{ex3fig3} presents the convergence of the mean solution error $\bar{\epsilon}_{N}^u$  and the mean output error $\bar{\epsilon}_{N}^s$ as functions of $N$ for five methods: EIM-GN, FOEIM-GN, SOEIM-GN, GN-SOEIM, and the standard GN method. As $N$ increases, all methods tend to reduce the errors. The accuracy of the GN-SOEIM method is almost indistinguishable from that of the standard GN method. This demonstrates that high-order interpolation significantly improves convergence, approaching the full GN performance while maintaining computational efficiency.

\begin{figure}[htbp]
	\centering
	\begin{subfigure}[b]{0.49\textwidth}
		\centering		\includegraphics[width=\textwidth]{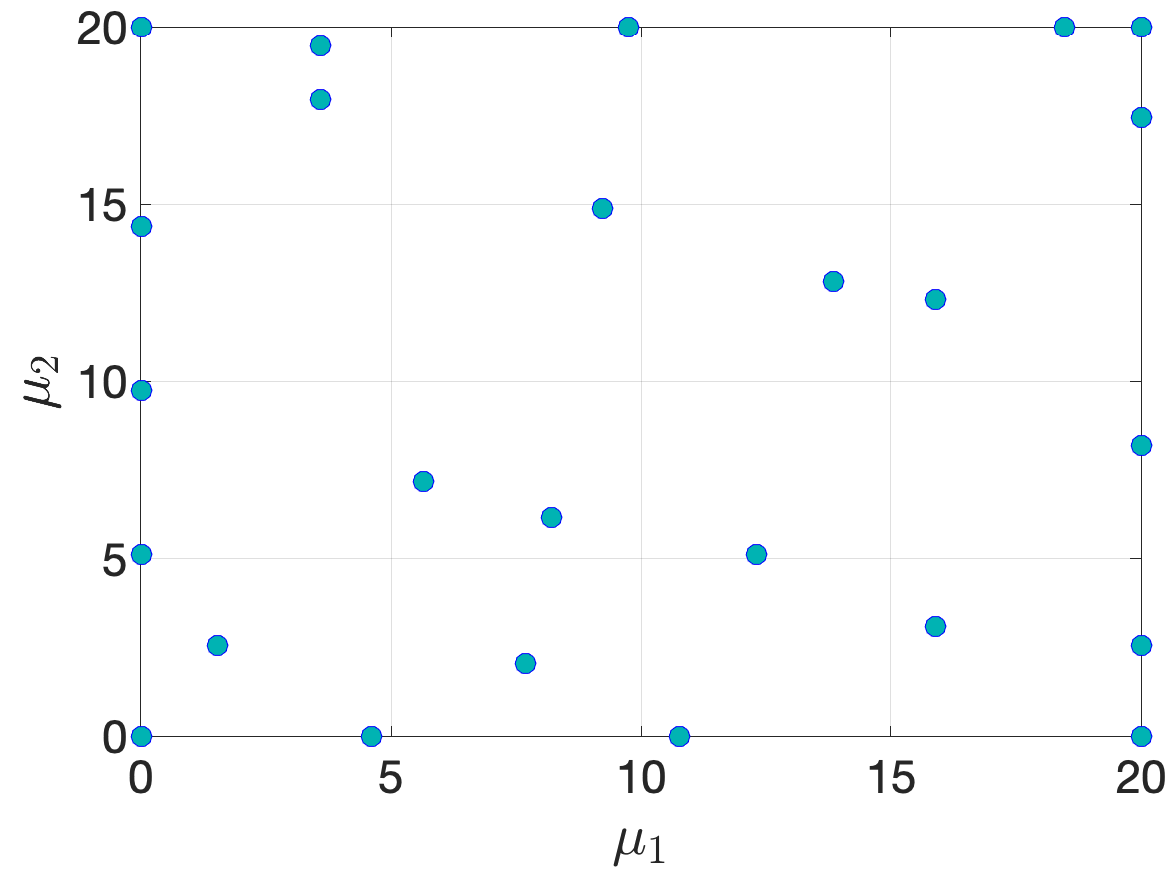}
		\caption{Parameter points in the sample $S_N$ for $N=25$.}
	\end{subfigure}
	\hfill
	\begin{subfigure}[b]{0.49\textwidth}
		\centering		\includegraphics[width=\textwidth]{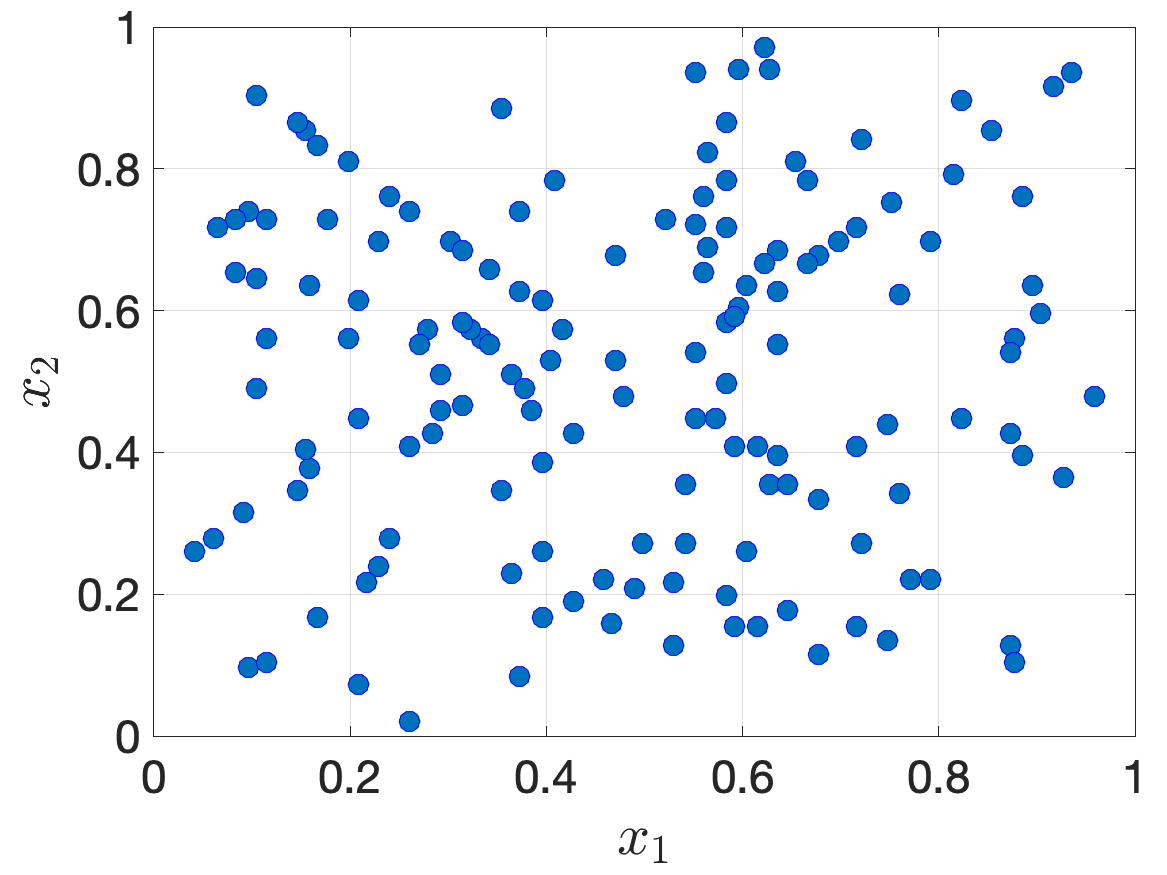}
		\caption{Interpolation points for $N=25$.}
	\end{subfigure}
	\caption{(a) distribution of the parameter sample points selected using the greedy sampling, and (b) distribution of the interpolation points for the SOEIM method for $N=25$.}
	\label{ex3fig2}
\end{figure}

\begin{figure}[htbp]
	\centering
	\begin{subfigure}[b]{0.49\textwidth}
		\centering		\includegraphics[width=\textwidth]{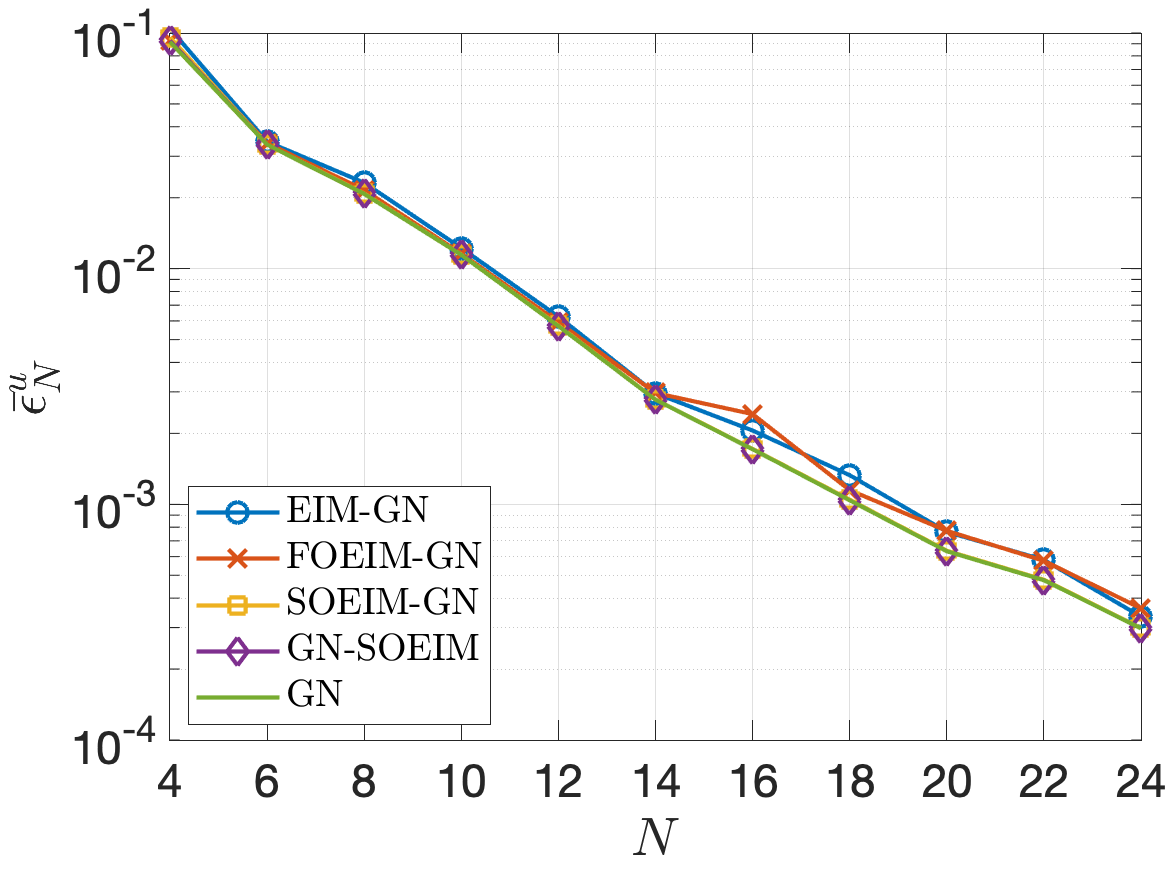}
		\caption{Average solution error $\bar{\epsilon}_{N}^u$.}
	\end{subfigure}
	\hfill
	\begin{subfigure}[b]{0.49\textwidth}
		\centering		\includegraphics[width=\textwidth]{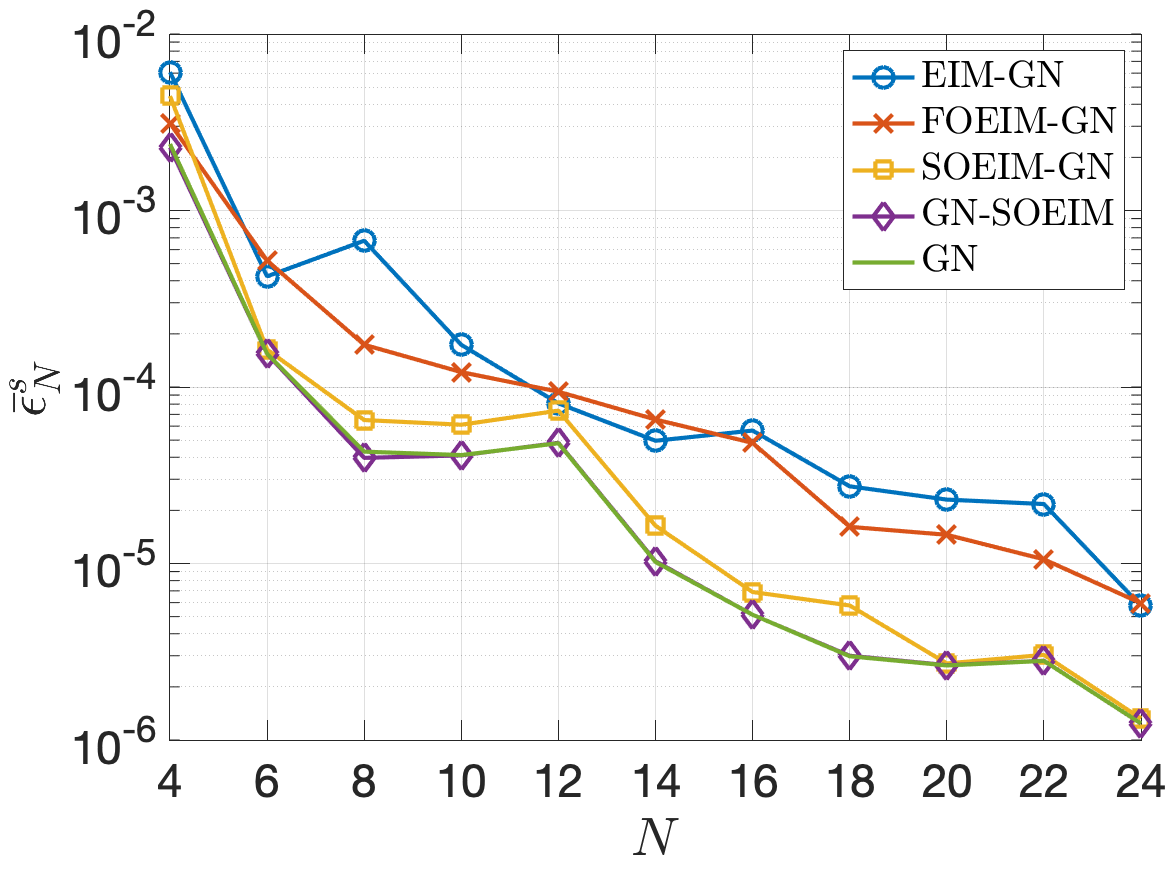}
		\caption{Average output error $\bar{\epsilon}_{N}^s$.}
	\end{subfigure}
	\caption{Comparison of accuracy between EIM-GN, FOEIM-GN, SOEIM-GN, GN-SOEIM, and GN methods.}
	\label{ex3fig3}
\end{figure}

Table \ref{ex3tab1} presents $\bar{\eta}_{N}^u$  and 
$\bar{\eta}_{N}^s$, as a function of $N$ for EIM-GN, FOEIM-GN, SOEIM-GN, and GN-SOEIM. SOEIM-GN performs better than both EIM-GN and FOEIM-GN. Across all values of $N$, GN-SOEIM consistently achieves the lowest effectivities very close to 1, indicating a very accurate and efficient approximation.  Notably, the output effectivties are considerably larger than the solution effectivities for EIM-GN and FOEIM-GN. In contrast, GN-SOEIM maintains consistently low output effectivities. Table \ref{ex3tab2} shows the computational speedup for various model reduction methods -- GN, EIM-GN, FOEIM-GN, SOEIM-GN, and GN-SOEIM -- compared to the finite element method (FEM) for different values of $N$. Except for the standard GN method, the other model reduction methods provide substantial speedups relative to FEM. Across all values of $N$, the GN method achieves a modest speedup (around 2.3–3.0x) compared to FEM. In contrast, the EIM-GN method provides the highest speedup over 5000x. While  GN-SOEIM exhibits similar speedup factors across a range of values for $N$ (around 5000x–7000x), it consistently outperforms other methods in terms of accuracy. This superior performance highlights the flexibility of GN-SOEIM in balancing computational efficiency and accuracy.

\begin{table}[htbp]
\centering
\small
	\begin{tabular}{|c||cc|cc|cc|cc|cc|}
		\cline{1-9}
    &		 
	 \multicolumn{2}{|c|}{EIM-GN} & \multicolumn{2}{c|}{FOEIM-GN} & 
		 \multicolumn{2}{c|}{SOEIM-GN} &
		 \multicolumn{2}{c|}{GN-SOEIM} \\   
   $N$ & $\bar{\eta}_{N}^u$ & $\bar{\eta}_{N}^s$ & $\bar{\eta}_{N}^u$ & $\bar{\eta}_{N}^s$ & $\bar{\eta}_{N}^u$ & $\bar{\eta}_{N}^s$ & $\bar{\eta}_{N}^u$ & $\bar{\eta}_{N}^s$ \\
		\cline{1-9}
4  &  1.13  &  2.60  &  1.02  &  1.33  &  1.04  &  2.05  &  1.01  &  0.97  \\  
 6  &  1.03  &  18.80  &  1.05  &  15.37  &  1.00  &  1.88  &  1.00  &  1.02  \\  
 8  &  1.32  &  28.15  &  1.10  &  10.78  &  1.01  &  3.38  &  1.00  &  0.99  \\  
 10  &  1.07  &  16.79  &  1.08  &  9.78  &  1.01  &  2.55  &  1.00  &  1.02  \\  
 12  &  1.10  &  10.65  &  1.12  &  6.67  &  1.02  &  3.21  &  1.00  &  1.02  \\  
 14  &  1.08  &  13.89  &  1.22  &  17.13  &  1.01  &  2.68  &  1.00  &  1.04  \\  
 16  &  1.22  &  43.72  &  1.84  &  24.98  &  1.01  &  2.37  &  1.00  &  1.01  \\  
 18  &  1.29  &  20.50  &  1.38  &  16.35  &  1.05  &  3.45  &  1.00  &  1.02  \\  
 20  &  1.23  &  31.65  &  1.68  &  19.02  &  1.05  &  2.18  &  1.00  &  1.01  \\  
 22  &  1.27  &  24.70  &  1.61  &  15.92  &  1.03  &  1.54  &  1.00  &  1.01  \\  
 24  &  1.14  &  21.92  &  1.55  &  16.16  &  1.02  &  1.61  &  1.00  &  1.02  \\  
		\hline
	\end{tabular}
	\caption{Average effectivities as a function of $N$ for EIM-GN, FOEIM-GN, SOEIM-GN, and GN-SOEIM.} 
	\label{ex3tab1}
\end{table}

\begin{table}[htbp]
\centering
\small
	\begin{tabular}{|c||c|c|c|c|c|}
		\cline{1-6}
  $N$  & \mbox{ } GN \mbox{ } & EIM-GN & FOEIM-GN  & SOEIM-GN & GN-SOEIM \\     
		\cline{1-6}
  4  &  2.94  &  7845.03  &  6865.92  &  5708.78  &  6982.76  \\  
 6  &  2.76  &  6933.33  &  6614.44  &   5268.42  &  6439.73  \\  
 8  &  2.73  &  6864.69  &  6637.31  &   4876.92  &  6266.16  \\  
 10  &  2.75  &  6607.40  &  6606.95  &  4633.53  &  6005.09  \\  
 12  &  2.77  &  6308.04  &  6215.00  &  4525.95  &  5828.64  \\  
 14  &  2.73  &  6183.81  &  6112.42  &  4392.77  &  5536.80  \\  
 16  &  2.71  &  6125.53  &  6073.42  &  4132.52  &  5490.81  \\  
 18  &  2.60  &  5971.05  &  5780.55  &   4061.84  &  5239.08  \\  
 20  &  2.44  &  5863.68  &  5626.32  &  3828.42  &  5173.65  \\  
 22  &  2.38  &  5739.28  &  5444.94  &  3772.29  &  5012.31  \\  
 24  &  2.43  &  5663.73  &  5354.86  &  3662.05  &  4977.26  \\  
		\hline
	\end{tabular}
	\caption{Computational speedup  relative to the finite element method (FEM) for different model reduction techniques (GN, EIM-GN, FOEIM-GN, SOEIM-GN, GN-SOEIM) as a function of $N$. The speedup is calculated as the ratio between the computational time of FEM and the online computational time of ROM.} 
	\label{ex3tab2}
\end{table}

\section{Conclusions}

This paper presents a class of high-order empirical interpolation methods to develop reduced-order models for parametrized nonlinear PDEs. The proposed methods significantly enhance the approximation power by leveraging higher-order partial derivatives to construct additional basis functions and interpolation points, providing greater accuracy and efficiency in reducing the dimensionality of nonlinear PDEs. Through numerical experiments, we demonstrated the superior convergence and accuracy of the high-order methods compared to traditional model reduction methods. These results suggest that the proposed methods can be widely applicable to a variety of problems, including those involving nonlinearities and nonaffine structures, while maintaining computational efficiency. Future work will focus on extending these methods to other nonlinear and time-dependent systems, as well as investigating their applicability to real-time control and optimization in engineering applications.

The development of model reduction methods is a very active area of research filled with open questions concerning the stability, accuracy, efficiency, and robustness of the reduced models. Model reduction methods can produce unstable or inaccurate solutions if they fail to preserve the geometric structures of the full model, such as conservation laws, symmetries, and invariants. To address this issue, structure-preserving model reduction techniques \cite{Afkham2017,Buchfink2019,Carlberg2015,Chaturantabut2016,Gong2017,Hesthaven2021,Lall2003,Sanderse2020,Schein2021} ensure that reduced models maintain key properties to improve both stability and accuracy. While hyper-reduction methods have successfully created efficient low-dimensional models, few approaches retain the structure of nonlinear operators such as the symplectic DEIM \cite{Peng2016} and energy-conserving ECSW scheme \cite{Farhat2015}. We would like to extend the high-order EIM to preserve geometric structures of the full model.

Model reduction on nonlinear manifolds was originated with the work \cite{Rutzmoser2017} on reduced-order modeling of nonlinear structural dynamics using quadratic manifolds. This concept was further extended through the use of deep convolutional autoencoders by Lee and Carlberg \cite{Lee2020}. Recently, quadratic manifolds are further developed to address the challenges posed by the Kolmogorov barrier in  model order reduction \cite{Geelen2023,Barnett2022}. Another approach is model reduction through lifting or variable transformations, as demonstrated by Kramer and Willcox \cite{Kramer2019}, where nonlinear systems are reformulated in a quadratic framework, making the reduced model more tractable. This method was expanded in the "Lift \& Learn" framework \cite{Qian2020} and operator inference techniques \cite{McQuarrie2023,Kramer2024}, leveraging physics-informed machine learning for large-scale nonlinear dynamical systems. We will combine our approach with nonlinear manifolds to develop more efficient ROMs for nonlinear PDEs.  

Model reduction methods applied to convection-dominated problems often struggle due to slowly decaying Kolmogorov $n$ widths caused by moving shocks and discontinuities. To address this issue, various techniques recast the problem in a more suitable coordinate frame using parameter-dependent maps, improving convergence. Approaches include POD-Galerkin methods with shifted reference frames \cite{Rowley2003}, optimal transport methods \cite{Iollo2014,Heyningen2023,Nguyen2023c}, and phase decomposition. Recent methods like shifted POD \cite{Reiss2018}, transport reversal \cite{Rim2018}, and transport snapshot \cite{Nair2019} use time-dependent shifts, while registration \cite{Taddei2020} and shock-fitting methods  \cite{Zahr2018,Zahr2020} minimize residuals to determine the map for better low-dimensional representations. We would like to extend this work to convection-dominated problems based on the optimal transport \cite{Heyningen2023,Nguyen2023c}.

With advances in data analytics and machine learning, data-driven ROMs have gained significant attention. Unlike projection-based ROMs, which rely on mathematical operators for Galerkin projection, data-driven ROMs infer expansion coefficients using regression models like radial basis functions \cite{Audouze2009,Ching2022,Heyningen2023,Walton2013,Xiao2015}, Gaussian processes \cite{Guo2018,Nguyen2016,Ortali2022}, or neural networks \cite{Hesthaven2018,Gao2021,Pichi2023}. These models treat the full-order model as a black box, making them easier to implement. However, they often lack error certification and require a large number of snapshots, which can be expensive for complex, high-dimensional systems. Future research will extend the proposed approach to data-driven model reduction.

A posteriori error estimation provides a critical advantage by ensuring confidence in the reduced model's accuracy and guiding the selection of the reduced model size based on accuracy needs. While rigorous error estimation techniques are well-developed for linear and weakly nonlinear PDEs \cite{Chen2009, deparis07, Eftang2012a, Huynh2007a, Huynh2010, PhuongHuynh2013, Karcher2018, Knezevic2011, Knezevic2010,  nguyen04:_handb_mater_model, Nguyen2007, Calcolo, ARCME, Rozza05_apnum,  Sen2006b, veroy04:_certif_navier_stokes, Veroy2002}, highly nonlinear PDEs present significant challenges. Dual-weighted residual (DWR) method enables error estimation in model reduction by using adjoint solutions to tailor error estimates for specific outputs \cite{meyer03:_effic_karhun_loeve,Nguyen2007,ARCME,Drohmann2015a,Carlberg2015b,Etter2020a,Yano2020,Du2021}. While DWR enhances reduced-order model (ROM) accuracy, it requires solving adjoint problems and is limited to first-order accuracy. To overcome these limitations, machine-learning error models \cite{Freno2019a,Blonigan2023} employ neural networks to predict errors. These models are more flexible but DWR remains more reliable for out-of-distribution errors. Future work will leverage these approaches to develop practical and reliable error estimates. 

\section*{Acknowledgements} \label{}
I would like to thank Professors Jaime Peraire, Anthony T. Patera, and Robert M. Freund at MIT, and Professor Yvon Maday at University of Paris VI for fruitful discussions. I gratefully acknowledge a Seed Grant from the MIT Portugal Program, the United States Department of Energy under contract DE-NA0003965 and the Air Force Office of Scientific Research under Grant No. FA9550-22-1-0356 for supporting this work.  

\bibliographystyle{elsarticle-num} 
\bibliography{library.bib}

%% else use the following coding to input the bibitems directly in the
%% TeX file.

% \begin{thebibliography}{00}

% %% \bibitem{label}
% %% Text of bibliographic item

% \bibitem{}

% \end{thebibliography}
\end{document}